\DeclareMathAlphabet{\pazocal}{OMS}{zplm}{m}{n}
\let\oldReturn\Return
\renewcommand{\Return}{\State\oldReturn}
\pgfplotsset{compat=1.5}
\newcommand{\overbar}[1]{\mkern 1.5mu\overline{\mkern-1.5mu#1\mkern-1.5mu}\mkern 1.5mu}
\newcommand{\B}{\pazocal{B}}
\newcommand{\K}{\pazocal{K}}
\newcommand{\D}{\pazocal{D}}
\newcommand{\E}{\pazocal{E}}
\newcommand{\I}{\pazocal{I}}
\newcommand{\nT}{n_{\scriptscriptstyle T}}
\newcommand{\x}{{X}}
\renewcommand{\u}{{U}}
\newcommand{\dx}{{X'}}
\newcommand{\remove}[1]{}
\def \cN{{\mathcal N}}
\def \*{\star}
\def \10n{\!\!\!\!\!\!\!\!\!\!}
\def \betn {\beta_{\tiny{\I}}}
\def \alpi {\alpha_{\tiny{\cN}}}
\newcommand{\R}{\mathbb{R}}
\newcommand{\bA}{\bar{A}}
\newcommand{\bB}{\bar{B}}
\newcommand{\bE}{\bar{E}}
\newtheorem{theorem}{Theorem}
\newtheorem{lem}[theorem]{Lemma}
\newtheorem{defn}[theorem]{Definition}
\newtheorem{rem}[theorem]{Remark}
\newtheorem{prob}[theorem]{Problem}
\newtheorem{prop}[theorem]{Proposition}
\numberwithin{theorem}{section}
\title{\LARGE \bf Optimal Selection of Interconnections in Composite Systems for Structural Controllability}
\author{Shana~Moothedath,
        Prasanna~Chaporkar
        and~Madhu~N.~Belur% <-this % stops a space
\thanks{The authors are in the Department of Electrical Engineering, Indian Institute of Technology Bombay, India. Email: $\lbrace$shana, chaporkar, belur$\rbrace$@ee.iitb.ac.in.}}
\begin{document}
\maketitle
\thispagestyle{empty}
\pagestyle{empty}
%%%%%%%%%%%%%%%%%%%%%%%%%%%%%%%%%%%%%%%%%%%%%%%%%%%%%%%%%%%%%%%
\begin{abstract}
In this paper, we study structural controllability of a linear time invariant (LTI) {\em composite} system consisting of several subsystems. We assume that the neighbourhood of each subsystem is unconstrained, i.e., any subsystem can interact with any other subsystem. The interaction links between subsystems are referred as {\em interconnections}. 
We assume the composite system to be structurally controllable if all possible interconnections are present, and our objective is to identify the minimum set of interconnections required to keep the system structurally controllable. 
We consider structurally identical subsystems, i.e., the zero/non-zero pattern of the state matrices of the subsystems are the same, but dynamics can be different. We present a polynomial time optimal algorithm to identify the minimum cardinality set of interconnections that subsystems must establish to make the composite system structurally controllable. Our general result we apply to special cases, where the minimum number can be more directly obtained. We considered controller canonical form, so-called structurally cyclic systems and subsystems that are structurally controllable with  a single input. The minimum number of interconnections required depends on some indices we defined in the paper, {\em maximum commonality index}, $\alpi$, and {\em unique dilation index}, $\betn$. More connectedness of subsystems leads to lower total value of $\alpi+\betn$ and if the subsystems have fewer number of connected components $\alpi$ decreases. 
\end{abstract}
%%%%%%%%%%%%%%%%%%%%%%%%%%%%%%%%%%%%%%%%%%%%%%%%%%%%%%%%%%%%%%%%%%%
\begin{IEEEkeywords}
Structural controllability, Composite systems, Minimum interconnections, Large-scale systems, Agent-based systems.
\end{IEEEkeywords}
%%%%%%%%%%%%%%%%%%%%%%%%%%%%%%%%%%%%%%%%%%%%%%%%%%%%%%%%%%%%%%%%%%%5
\section{Introduction}\label{sec:intro}
Recently there has been immense research advance in the area of large-scale dynamical systems collectively using concepts from control theory, network science and statistical physics \cite{LiuBar:16}. Very often these networks consists of interconnected smaller entities called {\it subsystems} interacting to its neighbours. We refer to the collective system as the {\it composite} system and the interaction links as {\it interconnections}. Our aim is to find a minimum cardinality set of interconnections that the subsystems should establish amongst others such that the composite system is controllable with a specified input matrix. 

Typically, complex networks are characterized by large system dimension. Hence, it is very important to device efficient frameworks to tackle various optimization problems on these systems.  Moreover, in most cases the system parameters of the complex networks are not known precisely because of uncertainties in the system model, time varying link weights of the graph and so on \cite{LiuBar:16}. Hence, for addressing system theoretic questions related to these networks, for instance controllability and feedback selection, researchers resort to the topological characteristics of the system. Control theoretic analysis of complex networks, when only the graph structure of the network is known, is done using `structural control' \cite{Lin:74}.

In this paper, we consider composite systems consisting of LTI {\em structurally identical} subsystems. Two subsystems are referred as structurally identical if the zero/non-zero pattern of their state matrices are the same. Thus, given a set of structurally identical subsystems, our objective is to devise a framework to find out the neighbours of each subsystem and the state informations to be passed that the composite system is controllable using minimum number of interconnections.
%%%%%%%%%%%%%%%%%%%%%%%%%%%%%%%%%%%%%%%%%%%%%%%%%%%%%%%%%%%5
\vspace*{-2.63 mm}
\subsection{Problem Formulation}\label{sec:problem}
Consider a set of $k$ subsystems, $S_1, \ldots, S_k$, with state matrices $A_1, \ldots, A_k$ and an input matrix $B$.  Let the dynamics of the subsystems be given by
\begin{eqnarray}
%\left.\begin{aligned}
\dot{x}_i(t)& = A_ix_i(t), \mbox{~for~} i=1,\ldots,k,
%y_i(t) &= C_ix_i(t),
%\end{aligned}\right\} ~ i=1,\ldots,k,
\end{eqnarray}
where $x_i \in \R^{n_i}$ denotes the state vectors and $A_i \in \R^{n_i \times n_i}$ denotes the state matrices. Here $\R$ denotes the set of real numbers. We define $E_{ij} \in \R^{n_i \times n_j}$ as the {\it connection matrix} from the $j^{\rm th}$ subsystem to the $i^{\rm th}$ subsystem. Composing the connection matrices, the {\it composite system} of $k$ subsystems is 
\begin{eqnarray}\label{eq:sys}
\dot{x}(t) &=& \underbrace{\small{\begin{bmatrix}
A_1 & E_{12} & \cdots & E_{1k}\\
E_{21} & A_2 & \cdots & E_{2k}\\
\vdots & \ddots & \ddots & \vdots\\
E_{k1} & E_{k2} & \cdots & A_k
\end{bmatrix}}}_A
x(t) + B u(t),
%+ \underbrace{\small{\begin{bmatrix}
%B_1 & 0 & \cdots & 0\\
%0 & B_2 & \cdots & 0\\
%\vdots & \ddots & \ddots & \vdots \\
%0 & 0 & \cdots & B_k
%\end{bmatrix} }}_B
%u(t), \nonumber \\
%y(t) &=& \underbrace{\small{\begin{bmatrix}
%C_1 & 0 & \cdots & 0\\
%0 & C_2 & \cdots & 0\\
%\vdots & \ddots & \ddots & \vdots \\
%0 & 0 & \cdots & C_k
%\end{bmatrix} }}_C x(t),
\end{eqnarray}
where $x = [x_1^{\scriptscriptstyle T}, \ldots, x_k^{\scriptscriptstyle T}]^{\scriptscriptstyle T} \in \R^{\nT}$ with $\nT = \sum_{i = 1}^k n_i$, $u \in \R^{m}$ and $B \in \R^{\nT \times m}$. Here, $x_i = [x_1^i,\ldots,x_{n_i}^i]^{\scriptscriptstyle T}$. A subsystem $S_i$, for $i \in \{1,\ldots, k\}$, is said to be an outgoing (incoming, resp.) neighbour of the subsystem $S_j$ if the connection matrix $E_{ij}$ ($E_{ji}$, resp.) is not the zero matrix. The set of incoming and outgoing neighbours are together referred as the {\it neighbours} of subsystem $S_j$.
We assume that for an arbitrary distinct ordered pair of subsystems $S_i, S_j$ all states of subsystem $S_j$ can connect to all states of subsystem $S_i$ and vice-versa. In other words, all entries of the matrices $E_{ij}$'s, for $i,j \in \{1,\ldots,k\}$, can possibly be non-zero. Our aim here is to establish the minimum number of interconnections such that the composite system given in equation~\eqref{eq:sys} is controllable using a specified input matrix. More precisely, we need to find sparsest $E_{ij}$'s for $i, j \in \{i,\ldots, k\}$ such that composite system $(A, B)$ is controllable. In this paper, we perform our analysis using structural systems theory. Now we define structured systems below.

The pair $(\bA, \bB)$ structurally represents a system $(A, B)$ if it satisfies the following:
\begin{eqnarray}\label{eq:struc}
\bA_{ij} &=& 0 \mbox{~whenever~} A_{ij} = 0,\mbox{~and} \nonumber \\
\bB_{ij} &=& 0 \mbox{~whenever~} B_{ij} = 0.
\end{eqnarray}
We refer to matrices $A$ and $B$ that satisfy \eqref{eq:struc} as a {\it numerical realization} of $\bA$ and $\bB$ respectively and $(\bA, \bB)$ as a {\it structured system}. Thus for a given $(A, B)$, $(\bA, \bB)$ structurally represent a class of control systems corresponding to all possible numerical realizations. The key idea in structural controllability is to determine controllability of the class of systems represented by $(\bA, \bB)$. Specifically, we have the following definition.

\begin{defn}\label{def:struccont}
The structured system $(\bA, \bB)$ is said to be structurally controllable if there exists at least one numerical realization $(A, B)$ such that $(A, B)$ is controllable.
\end{defn}

Even though the definition of structural controllability requires only one controllable realization, it is known that if a system is structurally controllable, then `almost all' numerical realizations of the same structure is controllable  \cite{Rei:88}. %Thus structural controllability ensures controllability of a continuum of control systems.
 For various applications in structural control see \cite{LiuSloBar:11}.

Now we formally define the optimization problem considered in this paper. We consider structurally identical subsystems, i.e, subsystems whose sparsity pattern are identical. Thus, $\bA_i = \bA_1$ for all $i \in \{1,\ldots, k\}$. We denote the structured state matrix of the subsystems as $\bA_s$.  Given a set of $k$ subsystems with identical structured state matrices $\bA_s \in \{0,\*\}^{n_s \times n_s}$ and a structured input matrix $\bB  \in \{0, \*\}^{\nT \times m}$, where $\nT = k \times n_s$, we want to find a set of sparsest connection matrices, $\bE_{ij} \in \{0,\*\}^{n_s \times n_s}$, for $i, j \in \{1,\ldots,k\}$, that the composite structured system (obtained corresponding to equation~\eqref{eq:sys}) along with the given $\bB$ is structurally controllable. Note that there are exponential number of ways that one can connect the subsystems. Our objective is to select the sparsest set of $\bE_{ij}$'s for $i,j = 1, \ldots, k$. Let $\K$ denote the set of all possible structured state matrices of the composite systems that can be formed using $k$ subsystems whose state matrices are $\bA_s$, such that the composite system is structurally controllable with the given $\bB$. Thus for all matrices in $\K$, the $(n_s \times n_s)$ diagonal blocks are $\bA_s$. Two matrices $\bA'$ and $\bA''$ in $\K$ differs only in the off-diagonal blocks. In other words, $\K$ consists of structured matrices $\bA' \in \{0,\*\}^{\nT \times \nT}$ that satisfies $\bA'_i = \bA_s$, for $i \in \{1,\ldots, k\}$, and $\bE'_{ij} \in \{0,\*\}^{n_s \times n_s}$, such that the resulting composite system $(\bA', \bB)$ is structurally controllable. Then, we need to solve the following optimization problem.
\begin{prob} \label{prob:similar_int}
Given $k$ subsystems with structured state matrix $\bA_s$ and structured input matrix $\bB$, find $\bA^\*~ \in~ \arg\min\limits_{\10n~~ \bA' \in \K}  \norm[\bA']_0.$ 
\end{prob}

Note that the set $\K$ is non-empty, since when all entries of $\bE_{ij}$'s are $\*$'s, for all $i,j \in \{1, \ldots, k\}$, the resulting composite system is structurally controllable. Solving the minimum interconnection problem is same as minimizing the non-zero entries in matrices in $\K$, since for all matrices in $\K$ the diagonal blocks are fixed and hence optimization is possible only corresponding to the off-diagonal blocks. This in turn is same as minimizing the interconnections. 
% In this paper we discuss Problem~\ref{prob:similar_int} for a single input case, i.e., $\bB \in \{0, \*\}^{n}$. However, the same algorithms and results described in this paper directly extends to a multi-input case, where $\bB \in \{0, \*\}^{n \times m}$. Discussions on multi-input case is suspended in this paper for notational convenience and brevity.
%%%%%%%%%%%%%%%%%%%%%%%%%%%%%%%%%%%%%%%%%%%%%%%%%%%%%%%%%%%%%
\subsection{Related Work, Motivation and Contributions}\label{sec:lit_sur}
\subsubsection*{Numerical Framework}
Controllability and observability of composite systems is addressed in \cite{CheDes:67}, \cite{DavWan:75}, \cite{WolHwa:74}. Composite  systems consisting of subsystems with similar dynamics is studied in \cite{Lun:86}, \cite{SunElb:91}. Conditions to check various system theoretic properties of composite systems when all subsystems are identical is given in \cite{Lun:86}. Reference \cite{SunElb:91} deals with decentralized controller synthesis of composite systems with identical subsystem dynamics and symmetric interconnections. 

\subsubsection*{Structured Framework}
 Structural analysis of composite systems is studied in literature where various conditions for checking structural controllability of composite systems in terms of subsystems are given (see \cite{Dav:77},  \cite{AndHon:82}, \cite{RecPer:91}, \cite{YanZha:95}, \cite{CarPeqAguKarJoh:17} and references therein). The algorithm given in \cite{CarPeqAguKarJoh:17} accomplishes this using a distributed algorithm. Our goal is to find a minimum set of interconnections that each subsystem must establish amongst other subsystems that the composite system is controllable or observable. While finding a minimum set of interconnections, we also identify a neighbour set of each subsystem and state informations that has to be communicated that the composite system is controllable utilizing the least number of interconnections. Although the subsystems considered in this paper are structurally identical, the numerical entries need not be the same.

\subsubsection*{Motivation}
There exists practically important class of composite systems, including robotic swarms, power grids and biological systems, consisting of similar entities (subsystems) interacting with each other towards performing a desired task. Further, in most of the applications it is desired to achieve the intended performance by keeping the inter-subsystem interactions the least.  The key focus of this paper is to find a minimum cardinality set of interconnections among subsystems such that the composite system is controllable with a specified input matrix. To the best of our knowledge this problem is not addressed in the literature. We tackle the problem from a structural framework, where instead of the numerical matrices the structured matrices are used.

\noindent We summarize this paper's contributions below.\\
\noindent $\bullet$ Given a set of structured subsystems with identical structured state matrices and a single input, we find the optimal number of interconnections that the subsystems should establish amongst each other such that the composite system is structurally controllable (Theorem~\ref{th:opt_value}).\\
\noindent $\bullet$ We give an algorithm of polynomial complexity to find a set of minimum cardinality interconnection edges that solves Problem~\ref{prob:similar_int}  (Algorithm~\ref{alg:similar_inter} and Theorem~\ref{th:opt}).\\
%\noindent $\bullet$ The solution obtained using Algorithm~\ref{alg:similar_inter} provides the set of neighbours that each subsystem should maintain and the state informations that must be communicated with its neighbours, such that the composite system is structurally controllable using the least possible number of interconnections. \\
\noindent $\bullet$ The results and algorithm presented in this paper apply to the multi-input case, where $\bB \in \{0,\*\}^{\nT \times m}$. This is discussed in Section~\ref{sec:extension}.

The organization of this paper is as follows: in Section~\ref{sec:prelim}, we give few graph theoretic preliminaries used in the sequel and some existing results. In Section~\ref{sec:res}, for structurally identical subsystems, we first find the minimum number of interconnections required to make a composite system structurally controllable. Then we give a polynomial time algorithm for solving Problem~\ref{prob:similar_int}. In Section~\ref{sec:illus}, we demonstrate our algorithm using an illustrative example and also discuss few special cases and the extension to the multi-input case. In Section~\ref{sec:conclu}, we give some concluding remarks.
%%%%%%%%%%%%%%%%%%%%%%%%%%%%%%%%%%%%%%%%%%%%%%%%%%%%%%%%%%%5%
\section{Review of Essential Graph Theoretic Results}\label{sec:prelim}
The key idea behind considering graphs for  analysing structured systems is because we can represent the influences of states and inputs on each state through a directed graph.
% A system is said to be {\em controllable} if it is possible to drive the state of the system from arbitrary initial state to any desired state in finite time by applying an appropriate input.
 In order to capture the interactions of states and inputs efficiently, we construct few digraphs corresponding to a structured system $(\bA, \bB)$ as described below.

Consider a structured  system $(\bA, \bB)$, where $\bA \in \{0,\*\}^{n \times n}$ and $\bB \in \{0, \*\}^{n \times m}$. Then the state digraph $\D(\bA) := (V_X, E_X)$, where $V_X= \{x_1,\ldots, x_n\}$ and $(x_i, x_j) \in E_X \Leftrightarrow \bA_{ji} = \*$. The system digraph $\D(\bA, \bB) := (V_X\cup V_U, E_X \cup E_U)$, where $V_U = \{u_1,\ldots, u_m\}$ and $(u_i, x_j) \in E_U \Leftrightarrow \bB_{ji} = \*$.  A state $x_j$ is said to be {\it accessible} if there exists a path from some input node $u_i$ to $x_j$.
Using the {\it strong connectedness} of the digraph $\D(\bA)$ one can check the accessibility of states $\{x_1,\ldots,x_n\}$. A digraph is said to be strongly connected if for each ordered pair of vertices $(v_i,v_k)$,
there exists a path from $v_i$ to $v_k$. A {\it strongly connected component} (SCC) is a maximal strongly connected subgraph of a digraph. Thus all states of a structured system are accessible if and only if all SCCs are accessible. We characterize the SCCs as per the following definition.

\begin{defn}\label{def:SCC}
In a digraph, an SCC $\hat{\cN}$ is said to be {\em non-top linked} if there are no directed edges from the nodes of other SCCs into any node in $\hat{\cN}$.
\end{defn}
Thus all states in a subsystem are accessible if and only if all the non-top linked SCCs are accessible.
While accessibility of all states is necessary for structural controllability, it is not sufficient. In addition to accessibility the system digraph should satisfy a {\it no-dilation} condition. Given a set of nodes, presence of a node set $S \subset V_{\x}$ such that its neighbourhood node set $T(S)$ (where node $x_i \in T(S)$, if there exists a directed edge from $x_i$ to a node in $S$), satisfying $|T(S)| < |S|$ is called as {\it dilation}. Note that, $S \subset V_{\x}$ but $T(S) \subset V_{\x} \cup V_{\u}$.
 Presence of dilations in $\D(\bA, \bB)$ can be easily checked using a matching condition on the system bipartite graph $\B(\bA, \bB)$ defined below.
 
  Given a bipartite graph $G_B = ((V, \widetilde{V}), E)$, where $V \cup \widetilde{V}$ denotes the set of nodes satisfying $V \cap \widetilde{V} = \phi$ and $E \subseteq V \times \widetilde{V}$ denote the set of undirected edges, a matching $M$ is a collection of edges $M \subseteq E$ such that no two edges in the collection share the same endpoint. That is, for any $(i, j)$ and $(u, v) \in M$, we have $i \neq u$ and $j \neq v$, where $i,u \in V$ and $j,v \in \widetilde{V}$. A matching $M$ is said to be a perfect matching of the bipartite graph $G_B$ if $|M| = {\rm min}(|V|, |\widetilde{V}|)$. Further, given $G_B$ and a cost function $c$ from the set $E$ to the set of non-negative real numbers $\R_+$, a minimum cost perfect matching is a perfect matching $M$ such that $\sum_{e \in M}c(e) \leqslant \sum_{e \in M'}c(e)$, where $M'$ is any perfect matching in $G_B$ \cite{Die:00}. There exists an equivalent matching condition on a bipartite graph denoted by $\B(\bA, \bB)$, for the no-dilation condition. The construction of $\B(\bA, \bB)$ is explained here in two stages. In the first stage, the state bipartite graph $\B(\bA) := ((V_{\dx}, V_{\x}), \E_{\x})$ is constructed, where $V_{\x}=\{x_1,x_2, \ldots, x_n \}$, $V_{\dx}=\{x'_1,x'_2, \ldots, x'_n \}$ and $(x'_{j}, x_i) \in \E_{\x} \Leftrightarrow (x_i, x_j) \in E_{\x}$. Subsequently, the system bipartite graph $\B(\bA, \bB) := ((V_{\dx}, V_{\x} \cup V_{\u}), \E_{\x} 
 \cup \E_{\u})$ is constructed, where $V_{\u} = \{u_1,u_2, \ldots, u_m \}$ and $( x'_{j}, u_i) \in \E_{\u} \Leftrightarrow (u_i, x_j) \in E_{\u}$. The following results relates $\B(\bA, \bB)$ and the no-dilation condition.
\begin{prop}\cite[Theorem 2]{Ols:15}\label{prop:dil}
 A digraph $\D(\bA,\bB)$ has no dilation if and only if the bipartite graph $\B(\bA,\bB)$ has a perfect matching.
\end{prop}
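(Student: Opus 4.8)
The plan is to reduce the statement to Hall's marriage theorem by identifying, for every subset of state nodes, the dilation neighbourhood $T(S)$ with the bipartite neighbourhood of the corresponding left-vertex set in $\B(\bA,\bB)$. Recall that the left vertex set of $\B(\bA,\bB)$ is $V_{\dx}=\{x'_1,\ldots,x'_n\}$, the right vertex set is $V_{\x}\cup V_{\u}$, and the edges satisfy $(x'_j,x_i)\in\E_{\x}\Leftrightarrow\bA_{ji}=\*$ and $(x'_j,u_i)\in\E_{\u}\Leftrightarrow\bB_{ji}=\*$. Since $|V_{\dx}|=n\leqslant n+m=|V_{\x}\cup V_{\u}|$, a perfect matching of $\B(\bA,\bB)$ (of size $\min(n,n+m)=n$) is precisely a matching that saturates every vertex of $V_{\dx}$.

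First I would fix the natural bijection $x_i\mapsto x'_i$ between $V_{\x}$ and $V_{\dx}$, and for $S\subseteq V_{\x}$ let $S'\subseteq V_{\dx}$ denote its image. The key book-keeping step is to verify that the neighbourhood $N_{\B}(S')\subseteq V_{\x}\cup V_{\u}$ of $S'$ in $\B(\bA,\bB)$ equals $T(S)$: for a state node, $x_i\in N_{\B}(S')$ iff $(x'_j,x_i)\in\E_{\x}$ for some $x_j\in S$, iff $\bA_{ji}=\*$ for some $x_j\in S$, iff there is a directed edge $x_i\to x_j$ in $\D(\bA,\bB)$ with $x_j\in S$, iff $x_i\in T(S)$; replacing $x_i,\bA_{ji},\E_{\x}$ by $u_i,\bB_{ji},\E_{\u}$ gives the same chain for input nodes. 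Hence $|T(S)|=|N_{\B}(S')|$ for every $S\subseteq V_{\x}$.

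With this identification, ``$\D(\bA,\bB)$ has no dilation'', i.e.\ $|T(S)|\geqslant|S|$ for all $S\subseteq V_{\x}$, is literally Hall's condition $|N_{\B}(S')|\geqslant|S'|$ for all $S'\subseteq V_{\dx}$. By Hall's marriage theorem this holds if and only if $\B(\bA,\bB)$ admits a matching saturating $V_{\dx}$, which as noted above is exactly a perfect matching of $\B(\bA,\bB)$; conversely a perfect matching forces Hall's condition and hence precludes dilations. This yields both implications with no further estimation needed.

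The step I would be most careful about is the transposition built into the bipartite construction, $(x'_j,x_i)\in\E_{\x}\Leftrightarrow(x_i,x_j)\in E_{\x}\Leftrightarrow\bA_{ji}=\*$, combined with the asymmetry that $S\subseteq V_{\x}$ contains only state nodes while $T(S)\subseteq V_{\x}\cup V_{\u}$ may contain input nodes. Getting the orientation of the digraph edges and the roles of the two bipartite sides exactly right is what makes $N_{\B}(S')=T(S)$ an equality rather than a correspondence only up to transpose; once that is pinned down, the proposition is a direct application of Hall's theorem.
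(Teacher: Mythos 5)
Your proof is correct: the identification $N_{\B}(S')=T(S)$ is verified accurately against the paper's edge conventions (including the transpose in $\E_{\x}$ and the fact that $T(S)$ may contain input nodes), and the reduction to Hall's marriage theorem then gives both directions. The paper itself offers no proof — it cites this as Theorem 2 of an external reference — and the Hall's-theorem argument you give is the standard one underlying that result, so there is nothing further to reconcile.
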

 
Using the state accessibility condition and the no-dilation condition, Lin proved the following result for structural controllability. 

\begin{prop}\cite [pp.207]{Lin:74}\label{prop:lin}
The structured system $(\bA,\bB)$ is structurally controllable if and only if the associated digraph $\D(\bA,\bB)$ has no inaccessible states and has no dilations.
\end{prop}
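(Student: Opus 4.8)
The plan is to establish the equivalence in two directions. I would handle the necessity of each of the two graph conditions by a short linear-algebra argument, and obtain the (harder) converse by constructing an explicit controllable numerical realization out of a combinatorial substructure of $\D(\bA,\bB)$.

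\emph{Necessity.} Suppose $(\bA,\bB)$ is structurally controllable and fix a controllable realization $(A,B)$. Let $R\subseteq V_X$ be the set of accessible states. Since any edge of $\D(\bA)$ emanating from an accessible state ends at an accessible state, the coordinate subspace $W=\mathrm{span}\{e_i : x_i\in R\}$ satisfies $AW\subseteq W$ for \emph{every} realization $A$ of $\bA$, and $\mathrm{Im}\,B\subseteq W$ for every realization $B$ of $\bB$; hence the reachable subspace $\langle A\mid \mathrm{Im}\,B\rangle\subseteq W$. If some state were inaccessible, $W$ would be a proper subspace and $(A,B)$ could not be controllable, a contradiction; so $\D(\bA,\bB)$ has no inaccessible states. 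Next, if $\D(\bA,\bB)$ had a dilation then, by Proposition~\ref{prop:dil}, $\B(\bA,\bB)$ would have no perfect matching, so the term rank of $[\bA\ \bB]$ is at most $n-1$ and therefore $\mathrm{rank}[A\ B]\leqslant n-1$ for every realization. Choosing a nonzero row vector $v$ with $v^{\scriptscriptstyle T}[A\ B]=0$ gives $v^{\scriptscriptstyle T}A^{j}B=0$ for all $j\geqslant 0$, so the controllability matrix is rank-deficient, again a contradiction. Hence no dilation occurs.

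\emph{Sufficiency.} Now assume $\D(\bA,\bB)$ has no inaccessible states and no dilation; I would reduce the goal to exhibiting one controllable realization, equivalently to showing that the symbolic controllability matrix $[\bB\ \bA\bB\ \cdots\ \bA^{\,n-1}\bB]$ has generic rank $n$ (i.e.\ some $n\times n$ minor is not identically zero in the $\*$-parameters). The route is through a spanning \emph{cactus configuration}: a collection of vertex-disjoint stems (simple paths originating at input nodes) and buds (cycles joined to the rest of the structure through one additional edge) whose state vertices exhaust $V_X$. I would assemble such a configuration in two steps. First, the no-dilation hypothesis gives, via Proposition~\ref{prop:dil}, a perfect matching in $\B(\bA,\bB)$; read inside $\D(\bA,\bB)$ this matching assigns to each state a distinct predecessor (another state or an input), so that the matched edges decompose into vertex-disjoint simple paths rooted at inputs together with vertex-disjoint simple cycles among states, and every state lies on exactly one of these. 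Second, the accessibility hypothesis guarantees each such cycle is reachable from an input, so it can be linked into the growing structure by a connecting path, converting the collection of paths-and-cycles into a cactus configuration. Finally, from a spanning cactus one reads off a realization: assign algebraically independent (or otherwise generic) values to the $\*$-entries lying on the cactus and $0$ elsewhere, order the states along the stems, and verify by a Kalman-rank (or PBH) computation that the selected $n\times n$ block of $[\bB\ \bA\bB\ \cdots\ \bA^{\,n-1}\bB]$ is block-triangular with nonvanishing determinant — equivalently, that $[\,sI-A\ \ B\,]$ keeps full row rank for every $s\in\mathbb{C}$. Thus the generic rank is $n$, and $(\bA,\bB)$ is structurally controllable.

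\emph{Main obstacle.} The delicate point is entirely in the sufficiency direction, and specifically in the passage from ``perfect matching $+$ accessibility'' to ``spanning cactus'': one must argue that the vertex-disjoint cycles produced by the matching can always be linked back to input nodes by connecting paths \emph{without} violating vertex-disjointness and without leaving any state uncovered (typically by processing cycles one at a time and truncating a connecting path as soon as it meets an already-used vertex), which is exactly where the two hypotheses are used together. The subsequent rank verification is comparatively routine, since the cactus is engineered so that the determinant of the chosen submatrix of the controllability matrix contains a dominant monomial in the structure parameters that no other term can cancel. An approach that tries to bypass cacti by directly locating a non-identically-zero $n\times n$ minor of the symbolic controllability matrix does not really avoid this core, because pinning down such a minor amounts to recovering the same stem-and-bud decomposition.
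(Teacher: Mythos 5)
This proposition is quoted from Lin (1974) and appears in the paper as a cited result with no proof, so there is no in-paper argument to compare yours against; what you have written is a reconstruction of the classical proof, and it is essentially correct. The necessity direction is complete as written: the accessible coordinates span a subspace that is $A$-invariant and contains $\mathrm{Im}\,B$ for every realization, and a dilation forces, via Proposition~\ref{prop:dil} and the equality of term rank with generic rank, $\mathrm{rank}[A\ B]\leqslant n-1$ for all realizations, whence $v^{\scriptscriptstyle T}A^{j}B=0$ for some $v\neq 0$ and all $j$. The sufficiency direction via matching $\to$ path/cycle decomposition $\to$ spanning cactus $\to$ generic realization is the standard route, and your reading of the perfect matching as assigning each state a distinct predecessor (so the matched subgraph splits into input-rooted simple paths and vertex-disjoint cycles covering $V_X$) is right. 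Two refinements would tighten it. First, your ``main obstacle'' is milder than you make it: attaching a cycle requires only a single distinguished edge, not a vertex-disjoint connecting path. If $U$ is the union of the still-unattached cycles, every vertex of $U$ is accessible, and any input-to-$U$ path must contain an edge crossing from the already-covered set into $U$; that crossing edge serves as the bud's distinguished edge, and induction on the number of remaining cycles finishes the construction without any truncation argument. (Do allow the degenerate case where the perfect matching uses no input edge, so the decomposition is all cycles and the first distinguished edge must originate at an input node.) Second, the closing step --- that a spanning cactus with generic nonzero weights on its edges and zeros elsewhere yields a controllable pair --- is a lemma in its own right, usually proved by induction on the buds with the PBH test, and should be stated and proved rather than absorbed into ``routine verification.'' With those two points made explicit, your sketch is a complete and correct proof of the cited theorem.
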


Alternatively, a structured system is said to be controllable if and only if all non-top lined SCCs are accessible by some input and there exists a perfect matching in  $\B(\bA, \bB)$.
%%%%%%%%%%%%%%%%%%%%%%%%%%%%%%%%%%%%%%%%%%%%%%%%%%%%%%%%%%%%%%%%%%%%%%%%%%%%%%%%%%%%%%%%%%%%%%%%5
\section{Algorithm and Results}\label{sec:res}
Given a set of structured subsystems and a structured input matrix, we first find the minimum number of interconnections required to make the composite system structurally controllable. Subsequently, we propose a polynomial time algorithm to identify an optimum set of interconnections. This algorithm thus solves Problem~\ref{prob:similar_int}. Before explaining the algorithm, we first give few constructions and supporting results. 

Given $k$ subsystems with identical structured state matrices $\bA_s$ and an input matrix $\bB$, we first construct the composite system $(\bA, \bB)$ as follows: for each subsystem,  $i=1,\ldots, k$, the state digraph $\D_i(\bA_s) := (V_{X_i}, E_{X_i})$, where $V_{X_i} = \{x_1^i, \ldots, x_{n_s}^i\}$ and $(x_k^i, x_j^i) \in E_{X_i}$ if $\bA_{s_{jk}} = \*$. The state digraph of the composite system, denoted by $\D(\bA)$, is obtained by compounding $\D_i(\bA_s)$, for $i=1,\ldots, k$, with all possible interconnections. The set of interconnections is denoted by $E_\I$.  Note that, we assume all possible interconnections are feasible. In other words, for distinct $i,j \in \{1,\ldots, k\}$, $(x_p^i, x_q^j) \in E_\I$, for all $p,q \in \{1,\ldots,n_s\}$. Then, $\D(\bA):= (\cup_{i=1}^kV_{X_i}, \cup_{i=1}^k E_{X_i} \cup E_\I)$.  Here, $\bA \in \{0, \*\}^{\nT \times \nT}$, where $\nT = k\times n_s$. Now we construct the composite system digraph $\D(\bA, \bB) := (\cup_{i=1}^kV_{X_i}\cup V_U, \cup_{i=1}^k E_{X_i} \cup E_\I \cup E_U)$. For $\bB \in \{0,\*\}^{\nT \times m}$, $V_U = \{u_1,\ldots, u_m\}$ and  $(u_i, x_p^j) \in E_U$ if the matrix $\bB$ has $\*$ at the $(n_s(j-1)+p)^{\rm th}$-row and $i^{\rm th}$-column position. This completes the construction of the digraphs associated with the composite system.

Now we will discuss the construction of the bipartite graphs associated with the composite system. The state bipartite graph $\B(\bA) := ((\cup_{i=1}^k V_{X'_i}, \cup_{i=1}^kV_{X_i}),\cup_{i=1}^k\E_{X_i} \cup \E_\I)$, where $V_{X'_i} = \{{x'^i_1}, \ldots, {x'^i_{n_s}}\}$, $({x'}_p^i, x_q^i) \in \E_{X_i} \Leftrightarrow (x_q^i, x_p^i) \in E_{X_i}$ and $({x'}_p^i, x_q^j) \in \E_\I \Leftrightarrow (x_q^j, x_p^i) \in E_\I$. Further, the system bipartite graph of the composite system $\B(\bA, \bB) := ((\cup_{i=1}^k V_{X'_i}, \cup_{i=1}^kV_{X_i}\cup V_U),\cup_{i=1}^k\E_{X_i}\cup \E_\I \cup \E_U)$, where $({x'}_p^i, u_j) \in \E_U \Leftrightarrow (u_j, x_p^i) \in E_U$. 
Let $\cN= \{\cN_1,\ldots, \cN_q\}$ denote the set of non-top linked SCCs of $\D_i(\bA_s)$, for $i \in \{1,\ldots, k\}$, that are not accessible in the digraph with vertex set $\cup_{i=1}^kV_{X_i}\cup V_U$ and edge set $\cup_{i=1}^k E_{X_i} \cup E_U$. In other words, these are the non-top linked SCCs of $\D_i(\bA_s)$, for $i \in\{1,\ldots,k\}$, that are not accessible when interconnections are not present. Thus these non-top linked SCCs can be made accessible only using interconnections.  With some abuse of notation we denote the condensed
version of $\cN_1,\ldots, \cN_q$ using the same notation. We now define a bipartite graph $\B(\bA, \bB, \cN) := ((\cup_{i=1}^k V_{X'_i}, \cup_{i=1}^kV_{X_i}\cup V_U\cup \cN),\cup_{i=1}^k\E_{X_i}\cup \E_\I \cup \E_U \cup \E_\cN)$, where $(x'^i_p, \cN_j) \in \E_\cN \Leftrightarrow x^i_p \in \cN_j$. Further, define the cost function $c$ as given below.
\begin{equation}
c(e) \leftarrow \begin{cases}
	0, \mbox{~for~} e \in \E_U,\\
	1, \mbox{~for~} e \in \cup_{i=1}^k\E_{X_i},\\  
	2, \mbox{~for~} e \in \E_\cN, \label{eq:cost}\\        
    3, \mbox{~for~} e \in \E_\I.
  \end{cases}  
\end{equation}
The discussions in this paper is for the single input case, i.e., $\bB = \{0,\*\}^{\nT \times 1}$. Thus $V_U = u_1$. With respect to $\B(\bA, \bB)$, we have the following results.

\begin{lem}\label{lem:input_node_1}
Let $\bA$ be the structured state matrix obtained by composing $k$ subsystems with all possible interconnections  and let $\bB$ be a single input matrix. Let $\overbar{M}$ be an optimum matching obtained by solving the minimum cost perfect matching on the bipartite graph $\B(\bA, \bB)$ under cost function $c$ given in equation~\eqref{eq:cost}. Then, $({x'_i}^j, u_1) \in \overbar{M}$ for some $i \in \{1,\ldots,n_s\}$ and $j \in \{1,\ldots, k\}$.
\end{lem}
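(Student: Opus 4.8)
The plan is to argue by contradiction: suppose $\overbar{M}$ contains no edge incident to $u_1$, i.e. $u_1$ is unmatched by $\overbar{M}$. Since $\overbar{M}$ is a \emph{perfect} matching of $\B(\bA,\bB)$ and $|V_{\dx}| = \nT = |V_{\x} \cup V_{\u}| - 1$ (there are $\nT$ primed state nodes on one side, and $\nT$ state nodes plus the single input node $u_1$ on the other), every node of $\cup_{i=1}^k V_{X'_i}$ and every state node in $\cup_{i=1}^k V_{X_i}$ is matched, and $u_1$ is the unique unmatched vertex. In particular $\overbar{M}$ restricted to the state nodes is a perfect matching of the \emph{state} bipartite graph $\B(\bA)$, and its cost is $\sum_{e \in \overbar{M}} c(e)$ with every edge drawn from $\cup_{i=1}^k \E_{X_i} \cup \E_\I$, hence contributing cost $1$ or $3$.

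The key step is to exhibit a cheaper perfect matching, contradicting optimality of $\overbar{M}$. First I would invoke Proposition~\ref{prop:dil} / Proposition~\ref{prop:lin}: because $\bA$ is obtained by composing the $k$ subsystems with \emph{all} possible interconnections and $(\bA,\bB)$ is structurally controllable, $\D(\bA,\bB)$ has no dilation, so $\B(\bA,\bB)$ does admit a perfect matching, and moreover there is freedom in which edges it uses. The idea is to take the edge $(x'^{j}_i, u_1)$ for whichever state $x^j_i$ is the head of the edge from $u_1$ in $\D(\bA,\bB)$ (such an edge exists since $\bB$ is a nonzero single-input matrix), add it to the matching, and remove from $\overbar{M}$ the edge currently covering $x'^{j}_i$ on the primed side. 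This frees the state node that was matched to that removed edge; I then need to re-match that freed state node using the remaining structure. Because the fully-interconnected $\bA$ is extremely dense (every $(x^a_p, x^b_q)$ is an edge for $a\neq b$), any freed state node can be re-matched to some still-unmatched primed node along an augmenting-type path, and one can choose this re-matching to use interconnection edges only where unavoidable. The net effect: we replaced a cost-$1$-or-$3$ edge incident to $x'^{j}_i$ with a cost-$0$ edge into $u_1$, at the expense of at most one rerouting; a careful accounting shows the total cost strictly decreases (the cost-$0$ edge into $u_1$ is strictly cheaper than anything it could displace, since $\E_U$ edges are the unique zero-cost edges and every state-side edge costs at least $1$). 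This contradicts the minimality of $\overbar{M}$.

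I expect the main obstacle to be making the rerouting argument precise: after swapping in the $u_1$-edge and deleting the conflicting edge, I must show the resulting near-perfect matching can be repaired to a genuine perfect matching whose cost is \emph{strictly} smaller, not merely no larger. The clean way is to consider the symmetric difference of $\overbar{M}$ with the ``target'' matching $M'$ that uses $(x'^{j}_i, u_1)$; this symmetric difference is a disjoint union of alternating paths and cycles, and because both are perfect on the state side and differ only in whether $u_1$ is matched, there is exactly one alternating path $P$ starting at $u_1$. Flipping $\overbar{M}$ along $P$ yields a perfect matching that now covers $u_1$; since $\overbar{M}$ did not cover $u_1$, the edge of $P$ at $u_1$ has cost $0$ while $\overbar{M}\triangle P$ gains that edge, and a parity/telescoping argument along $P$ shows the cost change is at most $-c(\text{the }u_1\text{ edge}) < 0$ after accounting for the alternation of costs $1$ and $3$ — or, more robustly, one simply observes $\overbar{M}$ was \emph{already} optimal, so no such strictly improving path can exist, which forces $u_1$ to have been matched in $\overbar{M}$ in the first place. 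Formalizing "the costs telescope favorably" is the delicate bookkeeping; everything else (existence of the $u_1$-edge, existence of \emph{some} perfect matching, the count $|V_{\dx}| = |V_\x| + 1$) is immediate from the setup.
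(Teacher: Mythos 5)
Your core move --- adding the zero-cost edge $(x'^j_i,u_1)$ and deleting the edge of $\overbar{M}$ that currently covers $x'^j_i$ --- is exactly the paper's proof, and the cost comparison ($0$ versus at least $1$) is the entire argument. Where you go astray is in believing that the right-side state node freed by this deletion must then be re-matched. It must not: $\B(\bA,\bB)$ has $\nT$ nodes on the left and $\nT+1$ on the right, and the paper's definition of a perfect matching requires only $|M|=\min(\nT,\nT+1)=\nT$, i.e.\ saturation of the \emph{left} side. Every perfect matching of $\B(\bA,\bB)$ therefore leaves exactly one right-side node exposed (this is precisely the ``unique right unmatched node'' exploited in Lemma~\ref{lem:unmatched_accessible}), so after the single swap the new edge set is already a perfect matching of strictly smaller cost and the contradiction is immediate. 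The augmenting-path / symmetric-difference / telescoping-cost machinery you introduce --- which you yourself flag as the unfinished ``delicate bookkeeping'' --- is servicing a constraint that does not exist, and your fallback (``$\overbar{M}$ was already optimal, so no improving path can exist, which forces $u_1$ to be matched'') is circular unless that cost computation is actually carried out. Also note that structural controllability of the fully interconnected system is not needed here: the edge $(x'^j_i,u_1)$ exists simply because $\bB$ is a nonzero column, and $\overbar{M}$ is assumed to be a perfect matching. Delete the rerouting discussion and your proof coincides with the paper's one-line swap argument.
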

\begin{proof}
Given $\overbar{M}$ is an optimum matching of $\B(\bA, \bB)$. We prove the result using a contradiction argument. Suppose $({x'_i}^j, u_1) \notin \overbar{M}$ for all $i \in \{1,\ldots,n_s\}$ and for all $j \in \{1,\ldots,k\}$. Then, since $\overbar{M}$ is a perfect matching, $({x'}_i^j, x_r^t) \in \overbar{M}$ for some node $x_r^t$. Construct a new matching $M'$ by breaking the edge $({x'}_i^j, x_r^t)$ and making the edge $({x'}_i^j, u_1)$, i.e., $M' = \{ \overbar{M} \setminus ({x'}_i^j, x_r^t)\} \cup \{({x'}_i^j, u_1)\}$. Notice that $c(M') < c(\overbar{M})$. This contradicts the assumption that $\overbar{M}$ is an optimum matching in $\B(\bA, \bB)$ and thus $({x'}_i^j, u_1) \in \overbar{M}$. 
\end{proof}

\begin{lem}\label{lem:unmatched_accessible}
Consider an optimum perfect matching $M$  in $\B(\bA, \bB)$ and let $\E_{\I'} = M \cap \E_\I$. Then, there exists a right unmatched node in $M$ such that it is accessible in the digraph constructed with vertex set $\cup_{i=1}^k V_{X_i} \cup V_U$ and edge set $\cup_{i=1}^k E_{X_i} \cup E_U \cup E_{\I'}$, where $(x^i_p,x^j_q) \in E_{\I'} \Leftrightarrow (x'^j_q, x^i_p) \in \E_{\I'}$.
\end{lem}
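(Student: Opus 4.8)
Let me think about this carefully. We have an optimum perfect matching $M$ in $\B(\bA, \bB)$. By Lemma~\ref{lem:input_node_1}, the input node $u_1$ is matched; say $(x'^{j_0}_{i_0}, u_1) \in M$ for some $i_0, j_0$. Since $u_1$ is on the right side and gets matched, and $|V_{X'}| = \nT$ while $|V_X \cup V_U| = \nT + 1$, exactly one node on the right side $\cup_i V_{X_i} \cup V_U$ is left unmatched by $M$. Since $u_1$ is matched, the unmatched node is some state node $x^{i_1}_{p_1}$. The claim is that this unmatched node is accessible in the digraph $H := (\cup_i V_{X_i} \cup V_U, \cup_i E_{X_i} \cup E_U \cup E_{\I'})$, where $E_{\I'}$ is the directed-edge version of the interconnection edges actually used in $M$.

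The plan is to trace a path backwards from the unmatched node using the matching. Starting from the unmatched right-node $x^{i_1}_{p_1}$: since $M$ is a perfect matching on the left side, the primed copy $x'^{i_1}_{p_1}$ is matched to some right node, and following that alternating-type walk — at each step from a state node $x$, go to its primed copy $x'$, which is matched in $M$ to some node $y$ (either another state node, via an edge of $\cup_i E_{X_i} \cup E_{\I'}$, or to $u_1$) — we generate a sequence of state nodes. Each matching edge $(x'^b_a, x^d_c)$ corresponds to a directed edge $(x^d_c, x^b_a)$ in $H$ (it is in $\cup_i E_{X_i}$ if $b = d$, and in $E_{\I'}$ if $b \neq d$, precisely because $E_{\I'}$ was defined as $M \cap \E_\I$ reversed). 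So walking backwards along matching edges traces a directed walk in $H$ ending at $x^{i_1}_{p_1}$. The walk either eventually reaches $u_1$ (giving accessibility directly), or it must revisit a state node, creating a cycle. I would argue the cycle case cannot "strand" the unmatched node: since the walk is deterministic (each state node has a unique primed copy, each left node has a unique match), if the backward walk from $x^{i_1}_{p_1}$ enters a cycle without ever hitting $u_1$, then $x^{i_1}_{p_1}$ itself lies on that cycle — but then $x^{i_1}_{p_1}$'s primed copy $x'^{i_1}_{p_1}$ is matched, and moreover we could then reroute: the cycle of matching edges is an alternating cycle that, swapped, would leave a different node unmatched while freeing up $x^{i_1}_{p_1}$; I would use the cost optimality of $M$ together with Lemma~\ref{lem:input_node_1}-style rerouting to show $u_1$ must feed into this structure, i.e., the backward walk reaches $u_1$ and hence $x^{i_1}_{p_1}$ is accessible in $H$.

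More precisely, here is the cleaner version I would write. Consider the functional digraph on state nodes defined by the matching: $f(x^b_a) = y$ where $(x'^b_a, y) \in M$. Every state node has out-degree exactly one under $f$ (to another state node or to $u_1$, a sink). The unmatched state node $x^{i_1}_{p_1}$ has in-degree zero under $f$ (nothing is matched to it). Iterating $f$ from $x^{i_1}_{p_1}$ yields a path in the functional digraph; it cannot close into a cycle back to $x^{i_1}_{p_1}$ because $x^{i_1}_{p_1}$ has in-degree zero, and it cannot enter a disjoint cycle because that would require the path to hit a node of in-degree $\geq 2$. Hence the iteration terminates, and the only sink is $u_1$, so $f^{(\ell)}(x^{i_1}_{p_1}) = u_1$ for some $\ell$. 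Reversing this sequence gives a directed path $u_1 \to \cdots \to x^{i_1}_{p_1}$ in $H$, using only edges of $\cup_i E_{X_i}$, $E_{\I'}$, and $E_U$. Therefore $x^{i_1}_{p_1}$ is accessible in $H$, which is exactly the claim. (I should double-check the in-degree argument: a node $z$ has in-degree under $f$ equal to the number of state nodes $x^b_a$ with $(x'^b_a, z) \in M$; since $M$ is a matching, $z$ as a right-side node is matched at most once, so in-degree is $\leq 1$ for every node, and $= 0$ exactly for the unmatched node. So the iteration from the unmatched node is a simple path ending at $u_1$. Good — this makes the argument clean and the cost-optimality of $M$ is actually not needed beyond Lemma~\ref{lem:input_node_1} guaranteeing $u_1$ is matched so that the unmatched node is a state node.)

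**Main obstacle.** The only subtle point is bookkeeping: verifying that every matching edge $(x'^b_a, x^d_c)$ genuinely corresponds to a directed edge in $H$ of the right type — edges with $b = d$ come from $\cup_i \E_{X_i}$ and reverse to $\cup_i E_{X_i}$, while edges with $b \neq d$ come from $\E_\I$, are in $M \cap \E_\I = \E_{\I'}$, and reverse to $E_{\I'}$ by the stated definition — so the traced path lies entirely in $H$. This is routine once the functional-digraph framing is set up, but it is where one must be careful that no edge used in the path falls outside $\cup_i E_{X_i} \cup E_U \cup E_{\I'}$. A secondary point worth a sentence is confirming the right side of $\B(\bA,\bB)$ has exactly one more node than the left, so that "the" unmatched node is well-defined and, by Lemma~\ref{lem:input_node_1}, is a state node rather than $u_1$.
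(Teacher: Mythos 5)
Your proof is correct and is essentially the paper's argument: the paper likewise uses Lemma~\ref{lem:input_node_1} to place $u_1$ in the matching and then traces the alternating chain of matching edges (starting from $u_1$ and recursing forward) until it terminates at the unique right unmatched node, with each matching edge reversing to a directed edge of $\cup_{i=1}^k E_{X_i} \cup E_U \cup E_{\I'}$. Your functional-digraph/in-degree framing just runs the same chain from the other end and makes the termination argument slightly more explicit.
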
 
\begin{proof}
The bipartite graph $\B(\bA, \bB)$ consists of $\nT$ left side nodes and $\nT+1$ right side nodes, where one extra node in the right side is the input node $u_1$. Thus any perfect matching in $\B(\bA, \bB)$ has size $\nT$ and hence in $M$ there is one right unmatched node. Now we need to show that this unmatched node is accessible in the digraph constructed using vertex set $\cup_{i=1}^k V_{X_i} \cup V_U$ and edge set $\cup_{i=1}^k E_{X_i} \cup E_U \cup E_{\I'}$, where $(x^i_p,x^j_q) \in E_{\I'} \Leftrightarrow (x'^j_q, x^i_p) \in M \cap \E_{\I}$. By Lemma~\ref{lem:input_node_1}, all optimum perfect matchings in $\B(\bA, \bB)$ consists of an edge $(x'^i_p, u_1)$ for some node $x'^i_p$. Let $(x'^i_p, u_1) \in M$. Thus the node $x^i_p$ is accessible in the specified digraph. Now in the matching $M$, the node $x^i_p$ satisfies one of the following: (a)~$x^i_p$ is unmatched, or (b)~$x^i_p$ is matched. In case~(a) the proof follows. In case~(b), let $(x'^j_q, x^i_p) \in M$. Then the node $x^j_q$ is accessible. Recursively using the same argument as before, we can say that the unmatched node in $M$ is accessible in the digraph constructed using vertex set $\cup_{i=1}^k V_{X_i} \cup V_U$ and edge set $\cup_{i=1}^k E_{X_i} \cup E_U \cup E_{\I'}$.
\end{proof}

Lemma~\ref{lem:unmatched_accessible} concludes that with respect to any optimum perfect matching in $\B(\bA, \bB)$, there exists a unique right unmatched accessible node in $\B(\bA, \bB)$. Let  $M^\*$ be an optimum matching in $\B(\bA, \bB, \cN)$  under cost function $c$. Now we give the following preliminary result to show that the input node $u_1$ is selected in $M^\*$.

\begin{lem}\label{lem:input_node}
 Let $M^\*$ be an optimum matching obtained by solving the minimum cost perfect matching on the bipartite graph $\B(\bA, \bB, \cN)$ under cost function $c$ given in equation~\eqref{eq:cost}. Then, $({x'_i}^j, u_1) \in M^\*$ for some $i \in \{1,\ldots,n_s\}$ and $j \in \{1,\ldots, k\}$.
\end{lem}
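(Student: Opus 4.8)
The statement for Lemma~\ref{lem:input_node} is essentially the analogue of Lemma~\ref{lem:input_node_1}, but now on the augmented bipartite graph $\B(\bA,\bB,\cN)$ which has the extra right-side condensed SCC nodes $\cN_1,\ldots,\cN_q$. The plan is to run the same swap/contradiction argument as in Lemma~\ref{lem:input_node_1}, with one extra case to handle because $u_1$ can now be ``displaced'' not only by a state node but potentially by the presence of the $\cN_j$ nodes in the matching. First I would suppose, for contradiction, that $({x'_i}^j,u_1)\notin M^\*$ for all $i\in\{1,\ldots,n_s\}$, $j\in\{1,\ldots,k\}$. Since $u_1$ is the unique input node and sits on the right side of $\B(\bA,\bB,\cN)$, and since the left side has exactly $\nT$ nodes while the right side has $\nT+q+1$ nodes, any perfect matching in the sense used here (size $=\min(|V|,|\widetilde V|)=\nT$) simply leaves $u_1$ unmatched under our assumption.

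The core step is the cost-improving swap. Fix any left node $x'^{\,t}_r$; since $M^\*$ is a perfect matching of size $\nT$ and every left node is matched, $x'^{\,t}_r$ is matched to some right node $w$, where $w$ is either a state node $x^{s}_{p}$ or one of the condensed nodes $\cN_\ell$ (it is not $u_1$ by assumption). I would then argue that I can always choose the left node $x'^{\,t}_r$ so that $(x'^{\,t}_r,u_1)\in\E_U$ — i.e., so that $u_1$ actually has an incident edge to swap onto. This is exactly where I expect the main obstacle to be: the argument of Lemma~\ref{lem:input_node_1} implicitly relies on $u_1$ having at least one neighbour, and more subtly on the swap strictly decreasing the cost. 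Here the edge being broken has cost $c(x'^{\,t}_r,w)\in\{1,2\}$ (it is $1$ if $w$ is a state node of the same subsystem in $\cup_i\E_{X_i}$, $2$ if $w\in\cN_\ell$) or $3$ if $w\in\E_\I$, while the new edge $(x'^{\,t}_r,u_1)$ has cost $0$; so as long as such a swap keeps the matching perfect, the cost strictly drops. I need the left neighbour of $u_1$ (there is one, since $\bB$ is a single nonzero column, so $\bB\neq 0$ and $\E_U\neq\emptyset$) to be matchable after the swap, which it automatically is: breaking $(x'^{\,t}_r,w)$ frees $x'^{\,t}_r$, and we re-match it to $u_1$, while $w$ becomes newly unmatched — but since $|M^\*|=\nT$ was already maximal and we have not changed $|M'|$, the result $M'=\{M^\*\setminus(x'^{\,t}_r,w)\}\cup\{(x'^{\,t}_r,u_1)\}$ is again a perfect matching of size $\nT$.

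Putting this together: pick $x'^{\,t}_r$ to be the (a) left neighbour of $u_1$ in $\B(\bA,\bB,\cN)$; it is matched in $M^\*$ to some $w\neq u_1$; perform the swap; then $c(M') = c(M^\*) - c(x'^{\,t}_r,w) + c(x'^{\,t}_r,u_1) = c(M^\*) - c(x'^{\,t}_r,w) < c(M^\*)$ since $c(x'^{\,t}_r,w)\geqslant 1 > 0 = c(x'^{\,t}_r,u_1)$, contradicting optimality of $M^\*$. Hence $({x'_i}^j,u_1)\in M^\*$ for some $i,j$. The only genuinely new bookkeeping relative to Lemma~\ref{lem:input_node_1} is noting that the new right-unmatched node after the swap may be a $\cN_\ell$ node rather than a state node, which is harmless since we only need a size-$\nT$ matching and make no claim about \emph{which} right node is left out at this stage; I would phrase the write-up to make that explicit, mirroring the sentence structure of the proof of Lemma~\ref{lem:input_node_1} so the reader sees it is the same mechanism.
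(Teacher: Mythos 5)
Your proposal is correct and uses essentially the same cost-improving swap argument as the paper's own proof: assume no $\E_U$ edge is in $M^\*$, break the edge matching a left neighbour of $u_1$, rematch it to $u_1$ at cost $0$, and contradict optimality. Your extra care in choosing the left node to be an actual neighbour of $u_1$ (guaranteed since $\bB$ is a nonzero column) and in noting that the swapped matching remains a size-$\nT$ perfect matching only makes explicit what the paper leaves implicit.
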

\begin{proof}
$\B(\bA, \bB, \cN)$ is a bipartite graph with $\nT$ vertices on the left side and $\nT+1+q$ vertices on the right side. Given $M^\*$ is an optimum matching of $\B(\bA, \bB, \cN)$. We prove the result using a contradiction argument. Suppose $({x'_i}^j, u_1) \notin M^\*$ for all $i \in \{1,\ldots,n_s\}$ and for all $j \in \{1,\ldots,k\}$. Then, since $M^\*$ is a perfect matching in $\B(\bA, \bB, \cN)$,  $({x'}_i^j, v) \in M^\*$ for some node $v$. Construct a new matching $M'$ by breaking the edge $({x'}_i^j, v)$ and making the edge $({x'}_i^j, u_1)$, i.e., $M' = \{M^\* \setminus ({x'}_i^j, v)\} \cup \{({x'}_i^j, u_1)\}$. Notice that $c(M') < c(M^\*)$. This contradicts the assumption that $M^\*$ is an optimum matching in $\B(\bA, \bB, \cN)$ and thus the proof follows. 
\end{proof}
Thus by Lemma~\ref{lem:input_node}, $|M^\* \cap \E_U| = 1$. Henceforth in the sequel $M^\*$ denotes an optimum matching in $\B(\bA, \bB, \cN)$, such that $|M^\* \cap \E_\cN| = \alpi$ and $|M^\* \cap \E_\I| = \betn$. Thus $|M^\* \cap\cup_{i=1}^k \E_{X_i}| = \nT-(1+\alpi+\betn)$. With respect to $M^\*$, we now prove the following lemmas. 

\begin{lem}\label{lem:alpha+beta_matching}
 Let $M^\*$ be an optimum matching in the bipartite graph $\B(\bA, \bB, \cN)$ under cost function $c$. Let $M^\*$ satisfy $|M^\* \cap \E_\cN| = \alpi$ and $|M^\* \cap \E_\I| = \betn$. Then, any minimum cost perfect matching $\widetilde{M}$ in $\B(\bA, \bB)$ satisfies $|\widetilde{M} \cap \E_\I| = \alpi + \betn$.
\end{lem}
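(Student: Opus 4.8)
The plan is to relate the two bipartite graphs $\B(\bA,\bB,\cN)$ and $\B(\bA,\bB)$ by turning an optimum matching of one into a perfect matching of the other, in both directions, and then comparing costs. The key observation is that each $\cN_j$-node on the right of $\B(\bA,\bB,\cN)$ is a ``condensed'' stand-in for a whole non-top-linked SCC: an edge $(x'^i_p,\cN_j)\in\E_\cN$ records that $x^i_p\in\cN_j$, and in the no-interconnection digraph such an SCC is inaccessible, so inside $\B(\bA)$ (restricted to one subsystem) every node of $\cN_j$ is matched among the intra-subsystem edges. The conceptual content of the lemma is that ``hitting'' an inaccessible SCC via an $\E_\cN$-edge (cost $2$) in $M^\*$ must be ``paid for'' by an actual interconnection edge (cost $3$) in $\widetilde M$, while the $\E_\I$-edges already present in $M^\*$ carry over directly; hence $|\widetilde M\cap\E_\I| = \alpi+\betn$.

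First I would prove $|\widetilde M\cap\E_\I|\le \alpi+\betn$ by constructing, from $M^\*$, a perfect matching $M'$ of $\B(\bA,\bB)$ with exactly $\alpi+\betn$ interconnection edges. Take $M^\*$, keep all its $\E_U$-, $\E_{X_i}$-, and $\E_\I$-edges, and delete its $\alpi$ many $\E_\cN$-edges $(x'^i_p,\cN_j)$. This leaves $\alpi$ many left-nodes $x'^i_p$ unmatched and removes the $\cN_j$-nodes (which do not exist in $\B(\bA,\bB)$ anyway). Each such freed node $x'^i_p$ corresponds to a node $x^i_p$ lying in some inaccessible non-top-linked SCC $\cN_j$; I must re-match $x'^i_p$ to some right-node in $\B(\bA,\bB)$. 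The natural choice is to route it through an interconnection edge into a node outside $\cN_j$ that is already reachable, or more carefully, to observe that we may always complete the matching using exactly one $\E_\I$-edge per freed left-node — giving $\alpi+\betn$ interconnection edges total — and that $M'$ is then a perfect matching of $\B(\bA,\bB)$ (it has size $\nT$, matching all $\nT$ left-nodes). Since $\widetilde M$ is a \emph{minimum cost} perfect matching and each $\E_\I$-edge costs $3$ while non-$\E_\I$ edges cost $\le 2$, a matching with $\alpi+\betn$ interconnection edges bounds $|\widetilde M\cap\E_\I|$ from above (after checking the cost is genuinely minimized by minimizing the count of weight-$3$ edges, using that perfect matchings of $\B(\bA,\bB)$ all have the same size $\nT$).

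For the reverse inequality $|\widetilde M\cap\E_\I|\ge\alpi+\betn$, I would go the other way: from $\widetilde M$ build a perfect matching $\widehat M$ of $\B(\bA,\bB,\cN)$ whose cost is at most $c(\widetilde M)+2\alpi-3(\text{something})$, and compare with the optimality of $M^\*$. Concretely, by Proposition~\ref{prop:dil} and Proposition~\ref{prop:lin} the interconnection edges selected by any perfect matching of $\B(\bA,\bB)$ must ``cover'' all $q$ inaccessible non-top-linked SCCs in $\cN$ in the sense that the resulting $E_{\I'}$ makes them accessible; combined with Lemma~\ref{lem:unmatched_accessible} this lets me convert $\widetilde M$ into a matching of $\B(\bA,\bB,\cN)$ in which the $\E_\cN$-nodes are matched, trading interconnection edges for $\E_\cN$-edges one-for-one. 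Optimality of $M^\*$ then forces $|\widetilde M\cap\E_\I|\ge\alpi+\betn$. Combining the two inequalities gives the claim.

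The main obstacle is the re-matching/augmenting-path bookkeeping in both directions: when I delete an $\E_\cN$-edge from $M^\*$ and must re-match the freed left-node $x'^i_p$, I need to guarantee this can be done in $\B(\bA,\bB)$ using \emph{exactly one} additional interconnection edge and without disturbing the already-counted $\E_\I$-edges — i.e., that the ``accessibility'' witnessed by the $\E_\cN$/$\E_\I$ structure of $M^\*$ translates into an available augmenting path of the right cost. Likewise, in the reverse direction I must argue that the $\betn+\alpi$-worth of interconnection usage in $\widetilde M$ can be ``reshaped'' so that $\alpi$ of those correspond to SCC-hits expressible via $\E_\cN$-edges. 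I expect to handle this by a careful path-following argument anchored at the input node $u_1$ (guaranteed in every optimum matching by Lemmas~\ref{lem:input_node_1} and~\ref{lem:input_node}) and at the unique right-unmatched accessible node from Lemma~\ref{lem:unmatched_accessible}, peeling off alternating segments one non-top-linked SCC at a time. All remaining steps — counting edges by type, evaluating $c$ on each constructed matching, and invoking optimality — are routine.
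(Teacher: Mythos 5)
Your overall strategy (turn $M^\*$ into a perfect matching of $\B(\bA,\bB)$ with $\alpi+\betn$ interconnections, then compare costs) is the same as the paper's, but the step you yourself flag as ``the main obstacle'' is exactly the step where your construction diverges from the paper's and where it breaks. You keep the $\betn$ edges of $M^\*\cap\E_\I$ and try to re-match only the $\alpi$ left-nodes freed by deleting the $\E_\cN$-edges, each via ``exactly one interconnection edge.'' The paper instead discards \emph{both} the $\E_\cN$- and the $\E_\I$-edges of $M^\*$, keeping only $M'=M^\*\cap\{\cup_{i=1}^k\E_{X'_i}\cup\E_U\}$, and then matches all $\alpi+\betn$ freed left-nodes at once in the residual graph $\B(\bA,\bB)\odot M'$. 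The reason this order of operations matters: since $M'$ contains only intra-subsystem edges (plus the one input edge), each subsystem contributes equally many unmatched left- and right-nodes to the residual graph (up to the single subsystem touching $u_1$), and because every cross-subsystem pair is a feasible interconnection, this per-subsystem balance immediately yields a perfect matching of the residual graph contained in $\E_\I$. In your version that balance is destroyed — a retained edge $(x'^i_p,x^j_q)\in\E_\I$ uses up a left-node of subsystem $i$ and a right-node of subsystem $j$ — so Hall's condition for matching the $\alpi$ freed left-nodes into the $\alpi+1$ unmatched right-nodes via $\E_\I$ (which forbids same-subsystem pairs) is no longer automatic, and you give no argument for it. You also do not justify why the completion can be forced to consist of interconnection edges only (the paper's reason: an intra-subsystem or input edge in the completion would let one build a cheaper matching in $\B(\bA,\bB,\cN)$ than $M^\*$). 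So the upper-bound half of your proof has a real hole precisely at its load-bearing step.

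On the other half, you are right that the statement ``\emph{any} minimum cost perfect matching $\widetilde{M}$ satisfies $|\widetilde{M}\cap\E_\I|=\alpi+\betn$'' needs a matching lower bound (the paper dispatches this with a one-line appeal to optimality of $M^\*$; note that under $c$ every perfect matching of $\B(\bA,\bB)$ containing the $u_1$-edge has cost $\nT-1+2|\widetilde{M}\cap\E_\I|$, so minimizing cost is exactly minimizing the interconnection count, and one must show that count cannot drop below $\alpi+\betn$). But your sketch of this direction — trading interconnection edges for $\E_\cN$-edges ``one-for-one'' — again presupposes what must be proved: an edge $(x'^i_p,x^j_q)\in\widetilde{M}\cap\E_\I$ converts to an $\E_\cN$-edge only if $x^i_p$ lies in some $\cN_h\in\cN$, and distinct conversions need distinct $\cN_h$'s; a simple cost comparison after reinterpreting $\widetilde{M}$ as a matching in $\B(\bA,\bB,\cN)$ does not by itself give the required inequality. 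Both halves therefore rest on unproved re-matching claims; the paper's device of stripping $M^\*$ down to its intra-subsystem skeleton and exploiting the structural identity of the subsystems is what closes the first (and, implicitly, the second).
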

\begin{proof}
Given $M^\*$ is an optimum matching in $\B(\bA, \bB, \cN)$. We first prove the existence of a perfect matching $M$ in $\B(\bA, \bB)$ satisfying $|M \cap \E_\I| = \alpi + \betn$. For this we construct a matching $M$ from $M^\*$ such that $|M \cap \E_\I| = \alpi + \betn$. Given $M^\*$ satisfies $|M^\* \cap \E_\cN| = \alpi$ and $|M^\* \cap \E_\I| = \betn$. By Lemma~\ref{lem:input_node}, $|M^\* \cap \E_U| = 1$. Thus, $|M^\* \cap \cup_{i=1}^k\E_{X'_i}| = \nT-(\alpi + \betn +1)$. Let $M' \subset M^\*$ is defined as $M' = M^\* \cap \{\cup_{i=1}^k\E_{X'_i} \cup \E_U\} $. Thus $|M'| = \nT-\alpi-\betn$. Note that $M'$ is a matching in $\B(\bA, \bB)$. Consider the bipartite graph $\B(\bA, \bB) \odot M'$, where $\odot$ denotes a difference operation in which all nodes with non-zero degree in $M'$ and the edges associated with these nodes are removed from $\B(\bA, \bB)$. More precisely, $\B(\bA, \bB) \odot M'$ consists of only those nodes in $\B(\bA, \bB)$ that are not matched in $M'$ and the edges between those nodes in $\B(\bA, \bB)$. Thus $\B(\bA, \bB) \odot M'$ is a bipartite graph with $\alpi + \betn$ nodes on the left side and $\alpi +\betn +1$ nodes on the right side.  Notice that $\B(\bA, \bB) \odot M'$ has a perfect matching. This is because since all subsystems are structurally identical and $M' = M^\* \cap \{\cup_{i=1}^k\E_{X'_i} \cup \E_U\} $, where $M^\*$ is an optimum matching in $\B(\bA, \bB, \cN)$ under cost function $c$, the number of nodes in $\B(\bA, \bB) \odot M'$ corresponding to each subsystem is the same in both left and right sides except for one subsystem. For one subsystem  (the $i^{\rm th}$ subsystem if $(x'^i_p, u_1) \in M'$ for some $p \in \{1,\ldots, n_s\}$) either the number of nodes in the left side of $\B(\bA, \bB)\odot M'$ is one \underline{less} than other subsystems or the number of nodes in the right side is one \underline{more} than the other subsystems. In both cases there exists a perfect matching in $\B(\bA, \bB) \odot M'$. Let $M''$ be an optimum perfect matching in $\B(\bA, \bB) \odot M'$ using cost function $c$. Then $M'' \subset \E_\I$. This is because if an edge in $\cup_{i=1}^k \E_{X_i} \cup \E_U$ is present in $M''$, then it contradicts the optimality of $M^\*$. Now $\widetilde{M} = M' \cup M''$ is a perfect matching in $\B(\bA, \bB)$. Note that,  $|\widetilde{M} \cap \E_\I| = \alpi + \betn$. This proves that there exists a perfect matching in $\B(\bA, \bB)$ consisting of $\alpi + \betn$ interconnections. Since $\widetilde{M}$ is constructed from $M^\*$ which is an optimum matching under cost function $c$, the optimality of $\widetilde{M}$ follows. 
\end{proof}
Thus by Lemma~\ref{lem:alpha+beta_matching}, we conclude that optimum matching in $\B(\bA, \bB)$ under cost $c$ has $\alpi + \betn$ interconnections. An intuitive explanation of these indices, $\alpi$ and $\betn$, is given in Remark~\ref{rem:constants}. In the result below we prove that from $\alpi + \betn$ interconnections in $\widetilde{M}$, the $\alpi$ interconnections connects to states in $\alpi$ distinct SCCs in the set $\cN = \{ \cN_1, \ldots, \cN_q\}$.

\begin{lem}\label{lem:alpha+beta_scc}
Let $\widetilde{M}$ be an optimum matching in $\B(\bA, \bB)$ obtained from an optimum  matching $M^\*$ in $\B(\bA, \bB, \cN)$ such that $|M^\* \cap \E_\cN| = \alpi$ and $|M^\* \cap \E_\I| = \betn$. Then, there exists $\{e_1, \ldots, e_{\alpi}\} \in \widetilde{M} \cap \E_\I$ such that for $e_t = (x'^i_p, x^j_q)$, $x^i_p \in \cN_t$, where $\cN_t \in \cN$. 
\end{lem}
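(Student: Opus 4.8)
The plan is to trace carefully how $\widetilde{M}$ was constructed from $M^\*$ in the proof of Lemma~\ref{lem:alpha+beta_matching}, and to show that the $\alpi$ interconnection edges inherited from the "$\E_\cN$-part" of $M^\*$ land in $\alpi$ distinct non-top-linked SCCs of the set $\cN = \{\cN_1,\ldots,\cN_q\}$. Recall that $M^\*$ decomposes as $M^\* \cap \E_U$ (one edge, by Lemma~\ref{lem:input_node}), $M^\* \cap \cup_i \E_{X_i}$, a set $M^\* \cap \E_\cN$ of exactly $\alpi$ edges, and $M^\* \cap \E_\I$ of exactly $\betn$ edges. For each edge $(x'^i_p, \cN_t) \in M^\* \cap \E_\cN$ we have, by the definition of $\E_\cN$, that $x^i_p \in \cN_t$. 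Since $M^\*$ is a matching, the $\alpi$ condensed SCC-nodes $\cN_t$ appearing here are pairwise distinct, and so are the $\alpi$ left nodes $x'^i_p$.

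First I would describe precisely the correspondence between these $\alpi$ edges of $M^\*$ and $\alpi$ of the interconnection edges of $\widetilde{M}$. In the construction, $M' = M^\* \cap \{\cup_i \E_{X'_i} \cup \E_U\}$ is kept verbatim, the $\alpi$ edges of $M^\* \cap \E_\cN$ and the $\betn$ edges of $M^\* \cap \E_\I$ are discarded, and they are replaced by an optimal perfect matching $M''$ of the reduced graph $\B(\bA,\bB)\odot M'$; then $\widetilde{M} = M' \cup M''$ with $M'' \subset \E_\I$ and $|M''| = \alpi + \betn$. The key point is which left nodes are unmatched by $M'$: these are exactly the left nodes of the $\alpi + \betn$ discarded edges, namely the $\alpi$ nodes $x'^i_p$ with $(x'^i_p,\cN_t)\in M^\*$ together with the $\betn$ left endpoints of $M^\*\cap\E_\I$. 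Hence $M''$ assigns, in particular, to each of those $\alpi$ special left nodes $x'^i_p$ some right node via an edge in $\E_\I$; write this edge as $e_t = (x'^i_p, x^j_q)$. Since the edge $(x'^i_p,\cN_t)$ was in $\E_\cN$, we have $x^i_p \in \cN_t$, which is exactly the claimed property, and the $\cN_t$ are distinct because the original $\alpi$ edges of $M^\* \cap \E_\cN$ had distinct SCC-endpoints.

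The step I expect to require the most care is the implicit claim that the reduced-graph matching $M''$ really does match each of these $\alpi$ special left nodes $x'^i_p$ by an interconnection edge (as opposed to, say, matching it inside its own subsystem, which would contradict optimality of $M^\*$, or the $\cN_t$ collapsing). For the subsystem-internal possibility: if $M''$ matched $x'^i_p$ to a node $x^i_r$ via an edge in $\E_{X_i}$, then replacing in $M^\*$ the edge $(x'^i_p,\cN_t)$ (cost $2$) by $(x'^i_p,x^i_r)$ (cost $1$) — while this leaves $\cN_t$ possibly unmatched — one can reroute: since $\cN_t$ was in the range of $M^\*$, and the reduced graph had a perfect matching, a standard alternating-path/augmentation argument relocates coverage of $\cN_t$ at no extra cost, contradicting optimality of $M^\*$ (this is the same style of local-exchange argument used in Lemmas~\ref{lem:input_node_1} and~\ref{lem:alpha+beta_matching}). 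For distinctness of the $\cN_t$: this is immediate since $M^\*$ is a matching, so no two of its edges share the condensed SCC-node endpoint. I would also note that $\widetilde{M}\cap\E_\I = M''$ by construction, so these $\alpi$ edges $\{e_1,\ldots,e_{\alpi}\}$ indeed lie in $\widetilde{M}\cap\E_\I$, and the remaining $\betn$ interconnection edges of $\widetilde{M}$ account for the "unique dilation" part. Assembling these observations gives the lemma.
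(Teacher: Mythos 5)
Your proposal is correct and follows essentially the same route as the paper, which simply recalls the construction of $\widetilde{M}$ from Lemma~\ref{lem:alpha+beta_matching} and observes that the $\alpi$ left nodes matched to distinct SCC-nodes in $M^\*\cap\E_\cN$ become the left endpoints of $\alpi$ interconnection edges of $M''=\widetilde{M}\cap\E_\I$. The one step you flag as delicate --- that $M''$ matches these left nodes only by $\E_\I$ edges --- is already settled in the paper by the fact $M''\subset\E_\I$ established in the proof of Lemma~\ref{lem:alpha+beta_matching}, so your extra exchange argument, while sound, is not needed.
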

\begin{proof}
Recall the construction of the matching $\widetilde{M}$ given in the proof of Lemma~\ref{lem:alpha+beta_matching}. In $\widetilde{M}$ there are exactly $\alpi + \betn
$ interconnection edges. Out of these interconnections $\alpi$ interconnections has left side nodes from distinct SCCs, say $\cN_1, \ldots, \cN_{\alpi}$. Thus the proof of the claim follows.
\end{proof}

In the result below we prove the existence of an optimum perfect matching $\hat{M}$ in $\B(\bA, \bB)$ that ensures accessibility of $\alpi$ SCCs in $\cN$ using only the interconnections in $\hat{M}$. 
\begin{lem}\label{lem:alpha_access}
Let $\bA$ be the structured state matrix obtained by composing $k$ subsystems with all possible interconnections  and let $\bB$ be an input matrix. Then, there exists an optimum matching $\hat{M}$ in $\B(\bA, \bB)$ such that $|\hat{M} \cap \E_\I| = \alpi + \betn$. Further, SCCs $\{ \cN_1, \ldots, \cN_{\alpi} \} \in \cN$ are accessible in the digraph with vertex set $\cup_{i=1}^k V_{X_i} \cup V_U$ and edge set $\cup_{i=1}^k E_{X_i} \cup E_U \cup E_{\I'}$, where $(x^j_q, x^i_p) \in E_{\I'} \Leftrightarrow (x'^i_p,x^j_q) \in \hat{M} \cap \E_{\I}$.
\end{lem}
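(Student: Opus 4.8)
The plan is to start from the matching $\widetilde{M}$ produced in Lemma~\ref{lem:alpha+beta_matching} and, if necessary, repair it by a sequence of cost‑preserving exchanges until the $\alpi$ target SCCs become accessible, the interconnection count staying fixed at $\alpi+\betn$ throughout.

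First I would collect what Lemmas~\ref{lem:alpha+beta_matching} and~\ref{lem:alpha+beta_scc} already give: an optimum matching $\widetilde{M}$ of $\B(\bA,\bB)$ with $|\widetilde{M}\cap\E_\I|=\alpi+\betn$, together with interconnection edges $e_1,\dots,e_{\alpi}\in\widetilde{M}\cap\E_\I$ whose left endpoints correspond to states lying, respectively, in $\cN_1,\dots,\cN_{\alpi}$. In the state digraph, $e_t$ is therefore an edge entering the strongly connected set $\cN_t$; hence, once the tail (right endpoint) of $e_t$ is accessible, all of $\cN_t$ is accessible. So it suffices to guarantee that, in the digraph $D'$ with vertex set $\cup_{i=1}^k V_{X_i}\cup V_U$ and edge set $\cup_{i=1}^k E_{X_i}\cup E_U\cup E_{\I'}$, where $E_{\I'}$ is read off from the interconnections of the current matching, every such tail is accessible.

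Next I would analyse the obstruction. If every tail is already accessible in $D'$ we take $\hat{M}=\widetilde{M}$ and are done. Otherwise, tracing predecessors through the matching exactly as in the proof of Lemma~\ref{lem:unmatched_accessible}, one sees that an inaccessible tail cannot lie on the alternating path joining $u_1$ to the unique unmatched node, so it must lie on a directed cycle of $D'$ that is vertex‑disjoint from the accessible component of $u_1$; along such a cycle the walk enters and leaves subsystems only through interconnection edges of the current matching. I would then pick one such interconnection edge $g$ and re‑route it so that its tail is moved from a subsystem on the cycle to a structurally identical subsystem sitting in the accessible component of $u_1$, absorbing the resulting mismatch by an alternating path (equivalently, an alternating cycle in $\B(\bA,\bB)$) whose remaining edges are intra‑subsystem (cost $1$) or input (cost $0$) edges, matched copy‑for‑copy between the two identical subsystems. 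Since every interconnection is feasible and the two subsystems are structurally identical, this again produces a perfect matching of $\B(\bA,\bB)$ of the same total cost; and because a minimum‑cost perfect matching of $\B(\bA,\bB)$ has exactly $\alpi+\betn$ interconnection edges by Lemma~\ref{lem:alpha+beta_matching}, the interconnection count is preserved. In the new matching the cycle is broken, so every SCC it was feeding becomes accessible.

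Finally I would iterate: each exchange strictly decreases the number of SCCs among $\cN_1,\dots,\cN_{\alpi}$ that are inaccessible in the current digraph, so after at most $\alpi$ steps we reach an optimum matching $\hat{M}$ with $|\hat{M}\cap\E_\I|=\alpi+\betn$ in which $\cN_1,\dots,\cN_{\alpi}$ are all accessible in $D'$, which is the assertion. The step I expect to be the main obstacle is the re‑routing: one must verify simultaneously that the alternating path exists and terminates inside the accessible component (this is exactly where structural identity of the subsystems is essential, as it lets an accessible copy of a node stand in for an inaccessible one while every edge of the path stays legal), that the exchange leaves the matching perfect, and that it does not raise the total cost, so that the interconnection count remains locked at $\alpi+\betn$ via Lemma~\ref{lem:alpha+beta_matching}.
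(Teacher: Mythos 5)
Your overall plan---start from the $\widetilde{M}$ of Lemma~\ref{lem:alpha+beta_matching} and perform cost-preserving exchanges that keep $|\widetilde{M}\cap\E_\I|=\alpi+\betn$ while making one more target SCC accessible at each step---is the same as the paper's, and your reduction of the task to making the tails of the $\alpi$ interconnection edges from Lemma~\ref{lem:alpha+beta_scc} accessible is correct. The gap is in the exchange itself, which you flag as the main obstacle and never close. You propose to move an interconnection edge so that its tail lands in an accessible, structurally identical subsystem and to ``absorb the resulting mismatch by an alternating path'' of intra-subsystem and input edges; but you do not establish that such an alternating path exists, that it terminates where needed, or that its net cost is zero, and without all three the modified edge set need not be a perfect matching of the same cost, so Lemma~\ref{lem:alpha+beta_matching} cannot be invoked to lock the interconnection count at $\alpi+\betn$.

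The paper avoids this machinery entirely by exploiting the object you already mention in passing: since $\B(\bA,\bB)$ has $\nT$ left vertices and $\nT+1$ right vertices, every perfect matching leaves exactly one right vertex $\hat{x}$ unmatched, and Lemma~\ref{lem:unmatched_accessible} guarantees $\hat{x}$ is accessible using only the interconnections already in the matching. If the SCC $\hat{\cN}\ni x^i_p$ with $(x'^i_p,x^j_q)\in\widetilde{M}\cap\E_\I$ is inaccessible, one simply replaces $(x'^i_p,x^j_q)$ by $(x'^i_p,\hat{x})$: because $\hat{x}$ was unmatched, the result is immediately a perfect matching with the same cost and the same number of interconnections, and $x^i_p$ (hence all of $\hat{\cN}$) becomes accessible from $\hat{x}$---no alternating path is required. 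A separate short case handles the situation where $\hat{x}$ lies in the same subsystem as $\hat{\cN}$, so that $(x'^i_p,\hat{x})$ is not an interconnection edge; there one instead re-routes onto $\hat{x}$ an interconnection edge whose left endpoint lies in subsystem $j$, which must exist by a counting argument on the identically sized subsystems. Replacing your alternating-path step with this single-edge swap to the unique unmatched accessible node is what is needed to complete your argument.
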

\begin{proof}
We  know, from Lemma~\ref{lem:alpha+beta_matching}, that there exists an optimum matching $\widetilde{M}$ in $\B(\bA, \bB)$ such that $|\widetilde{M} \cap \E_{\I}| = \alpi + \betn$. Also, from Lemma~\ref{lem:alpha+beta_scc} at least $\alpi$ left side nodes in $\widetilde{M} \cap \E_\I$ are from $\alpi$ distinct SCCs. Let $x'^i_p$ be an arbitrary node such that $(x'^i_p, x^j_q) \in \widetilde{M} \cap \E_{\I}$, $x^i_p \in \hat{\cN}$, $\hat{\cN} \in \cN$ and $\hat{\cN}$ is inaccessible in the digraph with vertex set $\cup_{i=1}^k V_{X_i} \cup V_U$ and edge set $\cup_{i=1}^k E_{X_i} \cup E_U \cup E_{\I'}$. By Lemma~\ref{lem:unmatched_accessible} we know that there exists a unique unmatched node in $\widetilde{M}$ that is accessible, say $\hat{x}$. Then $\hat{x}$ satisfies one of the following cases: (a)~$\hat{x}$ is in the same subsystem as of $\hat{\cN}$, or (b)~$\hat{x}$ is not in the subsystem of $\hat{\cN}$. We will resolve case~(b) first. Construct a new matching $\hat{M}$ such that $\hat{M} = \{\widetilde{M} \setminus (x'^i_p, x^j_q) \}\cup \{ (x'^i_p, \hat{x})\}$. Note that in $\hat{M}$ the number of interconnections is the same as in $\widetilde{M}$ and further the SCC $\hat{\cN}$ is accessible. Now we will resolve case~(a). In case~(a), note that $\hat{x}$ is in the same subsystem as $\hat{\cN}$. Since the unique unmatched node $\hat{x}$ is in the $i^{\rm th}$ subsystem and $(x'^i_p, x^j_q) \in \widetilde{M}$, there exists an interconnection edge in $\widetilde{M}$ matching a left side node in $j^{\rm th}$ subsystem to some node in a different subsystem, say $(x'^j_r, x^v_w) \in \widetilde{M}$, $j \neq v$. Construct a new matching $\hat{M} = \{\widetilde{M} \setminus (x'^j_r, x^v_w) \} \cup \{ (x'^j_r, \hat{x}) \}$. This is possible since $i \neq j$. Notice that $|\hat{M}\cap \E_\I| = \alpi+\betn$ and SCC $\hat{\cN}$ is accessible. Since $x'^i_p$ is arbitrary the proof follows.
\end{proof}
Using Lemmas~\ref{lem:alpha+beta_matching} and~\ref{lem:alpha_access}, now we give one of the main result of this paper.

\begin{theorem}\label{th:opt_value}
Let $\bA^\*$ be an optimum solution to Problem~\ref{prob:similar_int} and let $|\I^\*|$ be the number of interconnections in $\bA^\*$. Further, let  $q = |\cN|$ and let $M^\*$ be an optimum matching in the bipartite graph $\B(\bA, \bB, \cN)$ such that $|M^\* \cap \E_\cN| = \alpi$ and $|M^\* \cap \E_\I| = \betn$. Then, $|\I^\*| = \betn + q$.
\end{theorem}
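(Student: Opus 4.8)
The plan is to prove $|\I^\*|=\betn+q$ by the two inequalities, beginning with $|\I^\*|\leqslant\betn+q$. By Lemma~\ref{lem:alpha_access} there is an optimum matching $\hat M$ of $\B(\bA,\bB)$ with $|\hat M\cap\E_\I|=\alpi+\betn$ for which the SCCs $\cN_1,\dots,\cN_{\alpi}$ are already accessible once one keeps only the $\alpi+\betn$ interconnections that correspond to $\hat M$. After deleting every other interconnection, $\hat M$ is still a perfect matching of $\B(\bA',\bB)$, so by Proposition~\ref{prop:dil} the composite digraph has no dilation, while $\alpi$ of the $q$ SCCs of $\cN$ are accessible. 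For each of the remaining $q-\alpi$ SCCs I would add one interconnection from an already-accessible state into that SCC; this is possible because $\bB\neq 0$ makes some state accessible, and interconnections between distinct subsystems, chained if a source and a target would otherwise lie in the same subsystem, reach every SCC. Adding edges only enlarges the accessible set and leaves $\hat M$ intact, so the final composite system has no inaccessible non-top-linked SCC and no dilation, hence is structurally controllable (Proposition~\ref{prop:lin}); it uses $(\alpi+\betn)+(q-\alpi)=\betn+q$ interconnections, which proves the upper bound.

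For the lower bound, take any $\bA'\in\K$ with interconnection set $\I'$. By Proposition~\ref{prop:lin}, feasibility forces (i) every SCC of $\cN$ is accessible, and (ii) $\B(\bA',\bB)$ has a perfect matching $M$. From (i): each $\cN_j$ is non-top-linked and is inaccessible in the digraph with edge set $\bigcup_{i=1}^{k}E_{X_i}\cup E_U$, so the edge by which a path first enters $\cN_j$ is neither an input edge nor a within-subsystem edge; hence $\I'$ contains an interconnection into $\cN_j$, and since the $\cN_j$ are vertex disjoint these are $q$ distinct interconnections. From (ii): the cost in~\eqref{eq:cost} of a perfect matching of $\B(\bA,\bB)$ equals $\nT$, minus $1$ if it uses the input edge, plus twice its number of interconnections, so a minimum-cost one uses the input edge and the fewest possible interconnections; Lemma~\ref{lem:alpha+beta_matching} then gives that any perfect matching of $\B(\bA,\bB)$, in particular $M$, uses at least $\alpi+\betn$ interconnections.

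The crux, which I expect to be the real obstacle, is that (i) and (ii) cannot simply be added: a single interconnection may be a matching edge of $M$ and at the same time provide accessibility for one $\cN_j$. My plan is to do the accounting subsystem by subsystem. Group the interconnections of $\I'$ by the subsystem containing their head; since the interconnections into $S_i$ are the only edges that can help a left node of $S_i$ get matched (besides the edges of $\E_{X_i}$ and, for the input subsystem, $u_1$) and the only edges that can make an SCC of $\cN$ inside $S_i$ accessible, the analysis localises to $S_i$. Writing $\nu_i$ for the number of $\E_\I$-edges of $M^\*$ with left endpoint in $S_i$ and $c_i$ for the number of SCCs of $\cN$ inside $S_i$, so that $\sum_i\nu_i=\betn$ and $\sum_i c_i=q$, one needs to show that $\I'$ has at least $\nu_i+c_i$ interconnections into $S_i$. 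The point is that if a matching interconnection into $S_i$ is reused to cover an SCC of $\cN$ --- necessarily one whose sole state has no in-edge inside $S_i$, since $M^\*$ uses an $\E_\cN$-edge only when $\B(\bA,\bB,\cN)$ offers no cheaper $\E_{X_i}$-edge there --- then the left node it serves is exactly one that $M^\*$ matched through $\E_\cN$, so being forced onto an interconnection in $\B(\bA',\bB)$ adds one interconnection while removing the need for one dedicated accessibility interconnection; the trade-off is a wash and $\nu_i+c_i$ is preserved. Summing over $i$ yields $|\I'|\geqslant\betn+q$. Pinning down which $\E_\cN$-edges of $M^\*$ become forced interconnections in $\B(\bA',\bB)$, and that no perfect matching of $\B(\bA',\bB)$ can beat $\nu_i$ on the non-SCC part of $S_i$, is where the work concentrates; it rests on the carefully chosen increments $0<1<2<3$ of the cost function and on all subsystems being structurally identical.
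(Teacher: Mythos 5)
Your upper bound is essentially the paper's: build $\hat M$ with $\alpi+\betn$ interconnections rendering $\alpi$ SCCs of $\cN$ accessible (Lemmas~\ref{lem:alpha+beta_matching} and~\ref{lem:alpha_access}), then add $q-\alpi$ accessibility edges. That part is fine.

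The lower bound is where the proposal has a genuine gap. You correctly isolate the crux --- the two requirements ($q$ distinct interconnections entering the SCCs of $\cN$, and at least $\alpi+\betn$ interconnections inside any perfect matching of $\B(\bA',\bB)$) overlap, and the whole theorem hinges on showing the overlap is at most $\alpi$ --- but you never actually prove it; you assert ``the trade-off is a wash'' and then say that pinning this down ``is where the work concentrates.'' That is precisely the step that needs an argument, and the paper supplies one you do not: if a feasible solution used at most $q+\betn-1$ interconnections, then at least $\alpi+1$ of the matching interconnections would have left endpoints in distinct SCCs of $\cN$; rerouting each such edge $(x'^i_p,x^j_q)\in\E_\I$ to the edge $(x'^i_p,\cN_h)\in\E_\cN$ produces a perfect matching of $\B(\bA,\bB,\cN)$ whose cost drops by $1$ per rerouted edge, landing at $3\betn+2\alpi-1$ on the $\E_\I\cup\E_\cN$ part and contradicting the optimality of $M^\*$. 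Without this exchange, nothing in your write-up rules out a feasible matching that uses \emph{more} than $\alpi+\betn$ interconnections, all of them entering distinct SCCs, thereby paying less than $q+\betn$ in total; excluding that scenario is exactly what the optimality of $M^\*$ under the increments $2<3$ buys, and it must be invoked explicitly.

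Your proposed localization to subsystems adds a second difficulty rather than removing the first. The quantities $\nu_i$ are read off one particular optimal matching $M^\*$, whereas the adversarial feasible solution $\bA'$ is free to choose a perfect matching of $\B(\bA',\bB)$ whose interconnection edges are distributed across subsystems differently (interconnection targets are unconstrained, the single input node is shared, and distinct optimal matchings of $\B(\bA,\bB,\cN)$ can even realize different splits of $\alpha+2\beta$ into $\alpha$ and $\beta$). So the per-subsystem inequality ``at least $\nu_i+c_i$ interconnections into $S_i$'' is not justified as stated, and only the global count survives. I would drop the subsystem decomposition and run the exchange argument globally, as the paper does.
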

\begin{proof}
We prove this result in two steps. In step~(i) we show that $|\I^\*| \leqslant \betn + q$ and in step~(ii) we show that $|\I^\*| \geqslant \betn + q$. Thus combining Steps~(i) and~(ii) the result follows.  

\noindent{\bf Step~(i)}: Here we will prove that $|\I^\*| \leqslant \betn + q $.
From Lemmas~\ref{lem:alpha+beta_matching} and~\ref{lem:alpha_access}, we know that there exists a perfect matching $\hat{M}$ in $\B(\bA, \bB)$ that uses exactly $\alpi + \betn$ interconnections and out of $q$ SCCs in $\cN$, $\alpi$ SCCs are accessible using these interconnections. Thus the number of SCCs in $\cN$ that are not accessible after using the interconnections in $\hat{M}$ is $q - \alpi$. Accessibility of these SCCs can be achieved by adding $q-\alpi$ interconnections more. Thus using $(\alpi+ \betn)+(q-\alpi) = \betn + q$ interconnections, all SCCs are accessible and there exists a perfect matching in $\B(\bA, \bB)$. Thus we can compose the subsystems using $\betn+q$ interconnections such that the composite system is structurally controllable. Hence $|\I^\*| \leqslant \betn + q$.

\noindent{\bf Step~(ii)}: Here we will prove that $|\I^\*| \geqslant q + \betn$.
We prove this using a contradiction argument. Suppose not. Then $|\I^\*| < q + \betn$. This implies $|\I^\*| \leqslant q + \betn - 1$. With out loss of generality, assume that $|\I^\*| = q + \betn - 1$. Thus we can compose the subsystems using $q + \betn - 1$ interconnections such that the composite system is structurally controllable. Consider an optimum matching $\overbar{M}$ in $\B(\bA, \bB)$ under cost function $c$. We know from Lemma~\ref{lem:alpha+beta_matching} that $\overbar{M}$ consists of $\alpi+\betn$ interconnections. Thus $[(q + \betn - 1) - (\alpi+\betn)]= q-\alpi-1$ interconnections are solely for achieving accessibility condition. This implies that $\alpi+1$ SCCs are accessible using the interconnections in $\overbar{M}$. Note that SCCs, $\cN_1, \ldots, \cN_q$, are those SCCs whose states do not have a directed path from the input node when interconnections are not used. Hence at least one node in each of the $\alpi+1$ SCCs are connected using interconnection edges in $\overbar{M}$. Now we will construct a matching in $\B(\bA, \bB, \cN)$ from $\overbar{M}$. Note that $\overbar{M}$ is a perfect matching in $\B(\bA, \bB, \cN)$ also. Let $\E_q$ denotes the set of interconnections connecting one node each of $\alpi+1$ SCCs in $\overbar{M}$. Then, $|\E_q| = \alpi+1$. Remove $\E_q$ edges from $\overbar{M}$ and connect them to $\alpi+1$ SCC nodes, say $\{\cN_1, \ldots, \cN_{\alpi+1}\}$, in the right side of $\B(\bA, \bB, \cN)$. Let this new set of edges is denoted by $\E_{\alpi+1}$. Then, $M'' = \{\overbar{M} \setminus \E_q\}\cup \{ \E_{\alpi+1} \}$. The cost of this new matching is $3[(\alpi+\betn)-(\alpi+1)]+2(\alpi+1) = 3\,\betn+2\,\alpi-1$. Note that cost of optimum matching $M^\*$ in $\B(\bA, \bB, \cN)$ is $3\,\betn + 2\,\alpi$. Thus $c(M'') < c(M^\*)$. This contradicts that $M^\*$ is an optimum matching in $\B(\bA, \bB, \cN)$. Hence the assumption $|\I^\*| < q + \betn$ is not true. Thus $|\I^\*| \geqslant q + \betn$. Thus from Steps~(i) and~(ii), $|\I^\*| = q+\betn$.
This completes the proof.
\end{proof}

\begin{algorithm}[t]
  \caption{Pseudo-code for solving Problem~\ref{prob:similar_int} on structured subsystems}\label{alg:similar_inter}
  \begin{algorithmic}
\State \textit {\bf Input:} $k$ structured subsystems with state matrices $\bA_s$ and a single input matrix $\bB$
\State \textit{\bf Output:} Interconnections $\I_A$
\end{algorithmic}
  \begin{algorithmic}[1]
      \State Construct the bipartite graph $\B(\bA, \bB)\leftarrow ((\cup_{i=1}^k V_{X'_i}, \cup_{i=1}^k V_{X_i}\cup V_U), \cup_{i=1}^k \E_{X_i}\cup \E_\I \cup \E_U)$ \label{step:bipartite1}
      \State Construct the bipartite graph $\B(\bA, \bB, \cN)\leftarrow ((\cup_{i=1}^k V_{X'_i}, \cup_{i=1}^k V_{X_i}\cup V_U\cup \cN), \cup_{i=1}^k \E_{X_i}\cup \E_\I \cup \E_U \cup \E_\cN)$ \label{step:bipartite2}
      \State Define cost vector $c$ as in equation~\eqref{eq:cost}
%         \State  $c(e) \leftarrow \begin{cases}
%	0, \mbox{~for~} e \in \E_U,\\ 
%	1, \mbox{~for~} e \in \cup_{i=1}^k\E_{X_i},\\ 
%	2, \mbox{~for~} e \in \E_\cN, \label{eq:cost}\\        
%    3, \mbox{~for~} e \in \E_\I.
%  \end{cases}$
    \State Find min cost max matching in $\B(\bA, \bB, \cN)$, say $M^\*$ \label{step:MCMM}
    \State $M' \leftarrow M^\* \cap \{\cup_{i=1}^k\E_{X'_i} \cup \E_U\}$\label{step:M'}
    \State Find a perfect matching in $\B(\bA, \bB) \odot M'$, say $M''$\label{step:pm}
    \State $\widetilde{M} \leftarrow M' \cup M''$\label{step:Mt}
    %\State $M_1 \leftarrow M^\* \setminus \{ M^\* \cap \E_\cN\}$\label{step:M1}
    \State $V_{\cN'} \leftarrow \{x'^i_p: (x'^i_p, x^j_q) \in \widetilde{M}, x^i_p \in \cN_h, \cN_h \mbox{~not~accessible~} \}$\label{step:Vn}
    \State Let $x^t_r$ be the unique unmatched accessible node in $\widetilde{M}$\label{step:aceess_node}
    \While{$|V_{\cN'}| \neq 0$}
      \If{$x'^i_p \in V_{\cN'} \mbox{~and~} i \neq t$}
         \State $\widetilde{M} \leftarrow \{\widetilde{M} \setminus (x'^i_p, x^j_q) \} \cup \{(x'^i_p, x^t_r)\} $\label{step:Mt_1}
      \ElsIf {$x'^i_p \in V_{\cN'} \mbox{~and~} i = t$}
         \State Find $x^w_v$ such that for $s\in \{1,\ldots, n_s\},$ $(x'^j_s, x^w_v) \in \widetilde{M}$\label{step:find_breakable_edge}
         \State $\widetilde{M} \leftarrow \{\widetilde{M} \setminus (x'^j_s, x^w_v) \} \cup \{(x'^j_s, x^t_r)\} $\label{step:Mt_2}
      \EndIf
    \EndWhile\label{step:endwhile} 
    \State $\E^q_\I \leftarrow \{(x'^i_j,x^t_r): \mbox{~SCC~of~} i^{\rm th}$ subsystem~is not accessible in the digraph with vertex set $(\cup_{i=1}^k V_{X_i} \cup V_U)$ and edge set  $\cup_{i=1}^k \E_{X_i} \cup \E_U \cup (\widetilde{M}\cap \E_\I), x^t_r \mbox{~is~accessible~and~} i\neq t\}$\label{step:extra_accessibility}
 \State $\I_A \leftarrow \{(x_p^i, x_q^j):({x'}_q^j, x_p^i) \in \widetilde{M} \mbox{ and } i \neq j\} \cup \{{\E^q_\I} \}$
\label{step:soln}
\end{algorithmic}
\end{algorithm}

Now we give an optimal algorithm to solve Problem~\ref{prob:similar_int} in polynomial time.  
The pseudo-code of the proposed algorithm is given in Algorithm~\ref{alg:similar_inter}.

\noindent{\bf Steps~1-4}:  Initially we run a minimum cost perfect matching algorithm on the bipartite graph $\B(\bA, \bB, \cN)$ using cost function $c$. Let $M^\*$ be the optimum matching obtained. Let $|M^\* \cap \E_\I| = \betn$ and $|M^\* \cap \E_\cN| = \alpi$. From Lemma~\ref{lem:input_node}, we know that $|M^\* \cap \E_U| = 1$.

\noindent{\bf Steps~5-7}: Now we define a matching $M':= M^\* \cap \{\cup_{i=1}^k\E_{X'_i} \cup \E_U\}$. Note that $|M'| = \nT-(\alpi+\betn)$. Subsequently, we find the difference of $\B(\bA, \bB)$ and $M'$, denoted as $\B(\bA, \bB) \odot M'$. $\B(\bA, \bB) \odot M'$ consists of only those nodes in $\B(\bA, \bB)$ that are not matched in $M'$ and the edges between them.  Moreover, there exists a perfect matching $M''$ in $\B(\bA, \bB) \odot M'$ such that $M'' \subset \E_\I$ (see proof of Lemma~\ref{lem:alpha+beta_matching}). We define $\widetilde{M}$ as the union of $M'$ and $M''$. Note that, $\widetilde{M}$ is a perfect matching in $\B(\bA, \bB)$. Further, $|\widetilde{M} \cap \E_\I| = \alpi+\betn$ and $\alpi$ interconnections connects to states in $\alpi$ distinct SCCs. However, these $\alpi$ SCCs need not be accessible using these $\alpi+\betn$ interconnections in $\widetilde{M}$. Thus our aim is to redefine $\widetilde{M}$ in such a way that, in the new $\widetilde{M}$, $|\widetilde{M} \cap \E_\I| = \alpi+\betn$ and $\alpi$ SCCs are accessible using interconnections in $\widetilde{M} \cap \E_\I$.

\noindent{\bf Steps~8-17}: For achieving the accessibility of $\alpi$ SCCs, we first identify the $\alpi$ interconnection edges in $\widetilde{M}$ that connects to one state each in SCCs, $\cN_1,\ldots,\cN_{\alpi}$. Let $V_{\cN'}$ is the set of $\alpi$ left side nodes in $\B(\bA, \bB)$ belonging to SCCs that are matched through edges in $\E_\I$ in $\widetilde{M}$. Further these SCCs are not accessible even after using interconnections in $\widetilde{M}\cap \E_\I$. By Lemma~\ref{lem:unmatched_accessible}, we know that in $\widetilde{M}$ there exists an unmatched accessible node. Let $x^t_r$ be this node. Our idea is to break the edges corresponding to nodes in $V_{\cN'}$ from $\widetilde{M}$ and make new interconnections using the node $x^t_r$ such that SCCs become accessible. Consider an arbitrary vertex $x'^i_p \in V_{\cN'}$. Let $x^i_p \in \cN_h$. $x'^i_p$ satisfies one of the following cases: (a)~ $i \neq t$ or (b)~$i=t$. In case~(a), we redefine $\widetilde{M}$ by breaking the edge $(x'^i_p, x^j_q)$ and making the edge $(x'^i_p, x^t_r)$. Note that, in the updated $\widetilde{M}$, SCC $\cN_h$ is accessible. In case~(b), the unique unmatched accessible node belongs to the same subsystem as $x^i_p$. Thus edge $(x'^i_p, x^t_r)$ cannot be formed. However, notice that since the unique unmatched node is in $i^{\rm th}$ subsystem and  $(x'^i_p, x^j_q) \in \widetilde{M}$, there exists some edge $(x'^j_s, x^w_v)$ for some $s, v \in \{1,\ldots, n_s\}$ and $j \neq w$ in $\widetilde{M}$. Thus in case~(b), we redefine  $\widetilde{M}$ by breaking this edge and making the edge $(x'^j_s, x^t_r)$. Now SCC $\cN_h$ is accessible. This is achieved by keeping the number of interconnections the same as before, i.e., $\alpi+\betn$. Thus by the end of the Step~\ref{step:endwhile} $\alpi$ SCCs are accessible using $\widetilde{M}\cap \E_\I$ interconnections alone. Thus the number of not accessible SCCs are $q-\alpi$.

\noindent{\bf Steps~18-19}: Now we add $q-\alpi$ interconnections one each to some state in these $q-\alpi$ SCCs from accessible nodes in other subsystems. These set of edges that are added to attain accessibility of $q-\alpi$ SCCs are denoted by $\E^q_\I$. Thus using $[(\alpi+\betn) + (q-\alpi)] = q+\betn$ interconnections, we achieve accessibility of all SCCs $\{\cN_1, \ldots, \cN_q\}$ and a perfect matching in $\B(\bA, \bB)$. The final interconnection edge set is given by $\I_A$. This completes the description of Algorithm~\ref{alg:similar_inter}.

\begin{theorem}\label{th:opt}
Algorithm~\ref{alg:similar_inter}  which takes as input $k$ structured subsystems with state matrices $\bA_s$ of dimensions $n_s$ and input matrix $\bB$, gives as output the interconnection edges $\I_A$, which is an optimal solution to Problem~\ref{prob:similar_int}, i.e., $|\I_A| = |\I^\*|$.
\end{theorem}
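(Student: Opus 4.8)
The plan is to show that Algorithm~\ref{alg:similar_inter} produces a feasible interconnection set of size exactly $q+\betn$, which matches the lower bound established in Theorem~\ref{th:opt_value}. Since Theorem~\ref{th:opt_value} already gives $|\I^\*| = q + \betn$, it suffices to verify two things about the output $\I_A$: (i) the composite system built with interconnection set $\I_A$ is structurally controllable, and (ii) $|\I_A| = q+\betn$. Feasibility via Proposition~\ref{prop:lin} breaks into the no-dilation condition and the accessibility condition, which I will treat separately.

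First I would handle the no-dilation (perfect matching) part. The matching $\widetilde{M}$ constructed in Steps~5--7 is, by the argument in the proof of Lemma~\ref{lem:alpha+beta_matching}, a perfect matching in $\B(\bA,\bB)$ with $|\widetilde{M}\cap\E_\I| = \alpi+\betn$. The crucial observation is that every re-routing performed in the while-loop (Steps~10--17) replaces one interconnection edge $(x'^i_p,x^j_q)$ by another edge incident to the \emph{same} left node $x'^i_p$ (case~(a), Step~\ref{step:Mt_1}) or replaces a different interconnection edge $(x'^j_s,x^w_v)$ by one incident to the same left node $x'^j_s$ (case~(b), Step~\ref{step:Mt_2}); in both cases the right endpoint used is the unique unmatched accessible node $x^t_r$, which is available precisely because it was unmatched. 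Hence after each iteration $\widetilde{M}$ remains a perfect matching and the interconnection count stays at $\alpi+\betn$ — this is exactly the content of Lemma~\ref{lem:alpha_access}. Since the final edges $\E^q_\I$ added in Step~\ref{step:extra_accessibility} are extra edges that only enlarge the edge set of $\D(\bA,\bB)$, the matching $\widetilde{M}$ still certifies no dilation for the composite system associated with $\I_A$.

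Next I would verify accessibility. By Lemma~\ref{lem:unmatched_accessible}, using only the interconnections $\widetilde{M}\cap\E_\I$ (before the loop) the unique right-unmatched node $x^t_r$ is accessible. After the while-loop terminates (Step~\ref{step:endwhile}), Lemma~\ref{lem:alpha_access} guarantees that the $\alpi$ SCCs $\cN_1,\dots,\cN_{\alpi}$ that received interconnection edges with left endpoints inside them are now accessible through $\widetilde{M}\cap\E_\I$; one argues this by the same recursive path-tracing used in Lemma~\ref{lem:unmatched_accessible}, following matched edges backwards from the newly attached SCC node to $x^t_r$. That leaves exactly $q-\alpi$ SCCs in $\cN$ inaccessible, and Step~\ref{step:extra_accessibility} attaches one interconnection $\E^q_\I$ edge from an already-accessible node in another subsystem to each of them. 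So every SCC in $\cN$ becomes accessible, and since $\cN$ is by definition the full list of non-top-linked SCCs that were inaccessible in the interconnection-free digraph, Proposition~\ref{prop:lin} (in its SCC form) yields structural controllability.

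Finally the cardinality count: $\I_A = \{(x_p^i,x_q^j): (x'^j_q,x^i_p)\in\widetilde{M},\ i\neq j\}\cup\E^q_\I$ has $|\widetilde{M}\cap\E_\I| = \alpi+\betn$ edges from the first set and $|\E^q_\I| = q-\alpi$ from the second, for a total of $(\alpi+\betn)+(q-\alpi) = q+\betn = |\I^\*|$, using Theorem~\ref{th:opt_value}; one should also note the two sets are disjoint since $\E^q_\I$ edges all have right endpoint $x^t_r$ which is unmatched in $\widetilde{M}$. I expect the main obstacle to be the accessibility bookkeeping in the while-loop: one must argue carefully that re-routing to serve one SCC never destroys the accessibility already gained for a previously-processed SCC, and that case~(b)'s substitution (which touches an edge not incident to the node in $V_{\cN'}$ being processed) still leaves that SCC's node reachable — this requires tracking that the broken edge $(x'^j_s,x^w_v)$ lies on a path that is rerouted rather than severed, exactly as in the case~(a) analysis of Lemma~\ref{lem:alpha_access}. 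Everything else is a direct assembly of the preceding lemmas.
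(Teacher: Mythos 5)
Your proposal is correct and follows essentially the same route as the paper: the paper's proof of this theorem simply invokes Theorem~\ref{th:opt_value} for the lower bound $|\I^\*|=\betn+q$ and asserts that the algorithm attains accessibility and the no-dilation condition with exactly $\betn+q$ interconnections, with the detailed verification delegated to the algorithm's step-by-step description and Lemmas~\ref{lem:unmatched_accessible}--\ref{lem:alpha_access}. You have merely written out that verification (perfect-matching invariance under the while-loop re-routings, accessibility of the $\alpi$ SCCs plus the $q-\alpi$ extra edges, and the cardinality count) in more detail than the paper does.
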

\begin{proof}
By Theorem~\ref{th:opt_value} we know that the minimum number of interconnections that solve Problem~\ref{prob:similar_int} is $|\I^\*| = \betn + q$. Algorithm~\ref{alg:similar_inter} achieves accessibility of all states and no-dilation condition of the composed system using exactly $\betn+q$ interconnections. Thus, output of Algorithm~\ref{alg:similar_inter} is an optimal solution to Problem~\ref{prob:similar_int}.
\end{proof}
%The complexity of Algorithm~\ref{alg:similar_inter} is given in the following result.

\begin{theorem}\label{th:comp1}
Algorithm~\ref{alg:similar_inter} which takes as input $k$ structured subsystems with state matrices $\bA_s$ of dimensions $n_s$, and input matrix $\bB$ and gives as output the interconnection edges $\I_A$ has running time complexity $O(\nT^3)$, where $\nT = k \times n_s$. 
\end{theorem}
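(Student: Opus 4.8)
The plan is to walk through Algorithm~\ref{alg:similar_inter} step by step and bound the cost of each line by $O(\nT^3)$, then observe that the algorithm performs a constant number of such steps plus one loop whose total work is also $O(\nT^3)$. First I would account for the construction of the bipartite graphs in Steps~1--2. The graph $\B(\bA,\bB)$ has $\nT$ left nodes and $\nT+1$ right nodes, and since all interconnections are allowed, it has $O(\nT^2)$ edges; adding the $q \leqslant \nT$ SCC nodes and the edges $\E_\cN$ (each state lies in at most one $\cN_j$, so $|\E_\cN| \leqslant \nT$) keeps $\B(\bA,\bB,\cN)$ at $O(\nT^2)$ edges as well. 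Identifying the non-top-linked SCCs in $\cN$ and condensing them is a Tarjan/Kosaraju computation on the digraph with vertex set $\cup_i V_{X_i}\cup V_U$, which costs $O(\nT + |E|) = O(\nT^2)$. So Steps~1--3 are $O(\nT^2)$.

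Next I would bound Step~4, the minimum-cost perfect matching on $\B(\bA,\bB,\cN)$. This bipartite graph has $O(\nT)$ vertices on each side and $O(\nT^2)$ edges, and the Hungarian algorithm (or any standard $O(V^3)$ assignment solver, e.g.\ the Kuhn--Munkres implementation) solves min-cost perfect matching in $O(V^3) = O(\nT^3)$; this is the dominant term. Step~5 is a set intersection over $O(\nT)$ matched edges, hence $O(\nT)$. Step~6 computes a (cost-ignoring, or again min-cost) perfect matching in $\B(\bA,\bB)\odot M'$, a graph on $O(\alpha_{\cN}+\beta_{\I}) = O(\nT)$ vertices, so at most $O(\nT^3)$ again (in fact much less, but the crude bound suffices). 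Step~7 is a union of $O(\nT)$ edges.

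For the loop in Steps~8--17, I would argue that $|V_{\cN'}| \leqslant \alpha_{\cN} \leqslant \nT$, and each iteration removes exactly one node from $V_{\cN'}$, so there are at most $\nT$ iterations. Each iteration does a constant-size edge swap (case~(a)) or searches the current $\widetilde{M}$ for one breakable interconnection edge incident to a given subsystem (case~(b), Step~\ref{step:find_breakable_edge}), which scans $O(\nT)$ matched edges; checking which SCCs are accessible after a swap, if done per iteration, is another $O(\nT^2)$ accessibility pass, giving $O(\nT^3)$ total for the loop. Steps~18--19 add the $q-\alpha_{\cN}$ remaining accessibility edges, each chosen after an $O(\nT^2)$ reachability check, so $O(\nT^3)$; writing out $\I_A$ is $O(\nT)$. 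Summing, the algorithm is dominated by the matching computations and the accessibility passes, each $O(\nT^3)$, and since only a constant number of these plus an $\nT$-iteration loop occur, the overall complexity is $O(\nT^3)$.

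The main obstacle, and the step I would be most careful about, is Step~4: one must cite a concrete min-cost perfect matching algorithm with an $O(V^3)$ (equivalently $O(\nT^3)$) guarantee on a dense bipartite graph, since a naive augmenting-path implementation with Bellman--Ford-style relaxation could be $O(V E) = O(\nT^3)$ only if one is careful about the number of phases, and a Johnson-reweighted Dijkstra variant gives $O(V(E + V\log V))$, which is also within $O(\nT^3)$. The other subtlety is the cost bookkeeping inside the while-loop: I would make sure the accessibility tests are either hoisted out so the loop body is genuinely $O(\nT)$, or, if left inside, explicitly charged as $O(\nT^2)$ per iteration for a clean $O(\nT^3)$ loop bound. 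Everything else is routine graph-traversal accounting.
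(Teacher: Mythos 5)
Your proposal is correct and follows essentially the same route as the paper: identify the minimum-cost perfect matching on $\B(\bA,\bB,\cN)$ as the dominant $O(\nT^3)$ step and bound everything else (graph construction, the while-loop, the accessibility additions) within that budget. Your accounting is simply more detailed than the paper's two-sentence version, and your care about citing a concrete $O(V^3)$ assignment solver and about the per-iteration accessibility checks in the loop is a welcome tightening rather than a deviation.
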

\begin{proof}
Constructing the bipartite graphs $\B(\bA, \bB), \B(\bA, \bB, \cN)$ and solving the minimum cost perfect matching problem has complexity $O(\nT^3)$, where $\nT = k \times n_s$. The rest of the constructions are of linear complexity with maximum $\nT$ iterations. Thus complexity of Algorithm~\ref{alg:similar_inter} is $O(\nT^3)$.
\end{proof}
\begin{rem}\label{rem:constants}
Let $\Gamma_A$ be the minimum cardinality subset of all interconnections which can be used to achieve accessibility. Thus, $|\Gamma_A| = q$ (since there are $q$ non-top linked SCCs that are not accessible from the input). Also, let $\Gamma_D$ be the minimum set of interconnections which can be used to achieve the no-dilation condition. Then, $|\Gamma_D| = \alpi+\betn$ (since optimum matching in $\B(\bA, \bB)$  has $\alpi+\betn$ interconnections). The maximum cardinality of $\Gamma_A \cap \Gamma_D$ is the set of interconnections that can serve both the conditions, i.e., accessibility and the no-dilation. Thus, $\alpi$ is the maximum cardinality of $\Gamma_A \cap \Gamma_D$. In other words, $\alpi$ is the maximum number of interconnections present in sets $\Gamma_A$ and $\Gamma_D$ that can serve both the purposes. Hence $\betn = |\Gamma_D| - \alpi$, is the minimum number of interconnections in $\Gamma_D$ that are needed to meet the no-dilation condition solely.
\end{rem}
We refer to $\alpi$ as the {\em maximum commonality index} and $\betn$ as the {\em unique dilation index}. As the subsystems are more interconnected within themselves, the value of indices, $\alpi$ and $\betn$, decreases.
%%%%%%%%%%%%%%%%%%%%%%%%%%%%%%%%%%%%%%%%%%%%%%%%%%%%%%%%%%%%%%%
\section{Illustrative Example, Special Cases and Multi-input Case}\label{sec:illus}
In this section, we first give an illustrative example to demonstrate Algorithm~\ref{alg:similar_inter}. Then, we discuss few special cases and possible extensions.
\vspace*{-2.63 mm}
\subsection{Illustrative Example}\label{sec:illus_eg}
We demonstrate Algorithm~\ref{alg:similar_inter} through an illustrative example in Figure~\ref{fig:main}. The subsystems are $S_1, S_2, S_3, S_4$. The set $\cN = \{ \cN_1, \ldots, \cN_{11}\}$, where $\cN_1 = x^1_4$, $\cN_2 = x^1_5$, $\cN_3 = \{x^2_1, x^2_2, x^2_3\}$, $\cN_4 = x^2_4$, $\cN_5 = x^2_5$, $\cN_6 = \{x^3_1, x^3_2, x^3_3\}$, $\cN_7= x^3_4$, $\cN_8 = x^3_5$, $\cN_9 = \{x^4_1, x^4_2, x^4_3\}$, $\cN_{10} = x^4_4$, $\cN_{11} = x^4_5$. We first obtain an optimum matching $M^\*$ in $\B(\bA, \bB, \cN)$. $M^\* = \{ (x'^1_1,u_1)$, $(x'^1_2,x^1_4)$, $(x'^1_3,x^1_2)$, $(x'^1_4,\cN_1)$, $(x'^1_5,\cN_2)$, $(x'^2_1,x^2_2)$, $(x'^2_2,x^2_1)$, $(x'^2_3,\cN_3)$, $(x'^2_4,\cN_4)$, $(x'^2_5, \cN_5)$, $(x'^3_1,\cN_6)$, $(x'^3_2,x^3_1)$, $(x'^3_3,x^3_2)$, $(x'^3_4, \cN_7),$ $(x'^3_5, \cN_8)$, $(x'^4_1,x^4_2)$, $(x'^4_2,x^4_1)$, $(x'^4_3, \cN_9)$, $(x'^4_4, \cN_{10})$, $(x'^4_5, \cN_{11}) \}$. Here, $\alpi=11$ and $\betn = 0$. A matching $\widetilde{M}$ is obtained corresponding to this $M^\*$ as shown in Figure~\ref{fig:illus_1}. Here, blue coloured edges are the edges corresponding to $M'$ given in Step~\ref{step:M'} and the red edges are edges corresponding to $M''$ shown in Step~\ref{step:pm}. Thus the blue and red edges in Figure~\ref{fig:illus_1} together constitute matching $\widetilde{M}$. With respect to $\widetilde{M}$ the not accessible SCCs are $\{\cN_6, \cN_7, \ldots, \cN_{11}\}$. The node set $V_{\cN'} = \{x'^3_1, x'^3_4, x'^3_5, x'^4_3, x'^4_4, x'^4_5\}$ and the unique unmatched accessible node corresponding to $\widetilde{M}$ is the blue coloured node, $x^2_5$. 

Now we redefine $\widetilde{M}$.
In order to redefine $\widetilde{M}$, we first break the edge $(x'^3_1, x^4_3)$ from $\widetilde{M}$ and make edge $(x'^3_1, x^2_5)$ as shown in Figure~\ref{fig:illus_2}. After this, SCCs $\cN_6, \cN_9$ become accessible. Thus, $V_{\cN'} =\{x'^3_4, x'^3_5, x'^4_4, x'^4_5\} $. The unique unmatched node in this stage is $x^4_3$.
Now we further redefine $\widetilde{M}$. To do this we break the edge $(x'^3_4, x^4_4)$ and make the edge $(x'^3_4, x^4_3)$ as shown in Figure~\ref{fig:illus_3}. At the end of this stage, SCCs $\cN_7, \cN_{10}$ become accessible. Thus, $V_{\cN'} =\{ x'^3_5, x'^4_5\} $. The unique unmatched node in this stage is $x^4_4$. 
In the final step, we now redefine $\widetilde{M}$ by breaking edge $(x'^3_5,x^4_5)$ and making edge $(x'^3_5, x^4_4)$ as shown in Figure~\ref{fig:illus_4}. Finally, SCCs $\cN_8, \cN_{11}$ are also accessible now. This completes Step~\ref{step:endwhile}. In this example $\alpi = q =11$. Thus, $\E^q_\I = \phi$. Thus the minimum number of interconnections to make the composite system structurally controllable is equal to $\betn +q = 0+11 =11$ as shown by the red edges in Figure~\ref{fig:illus_4}.
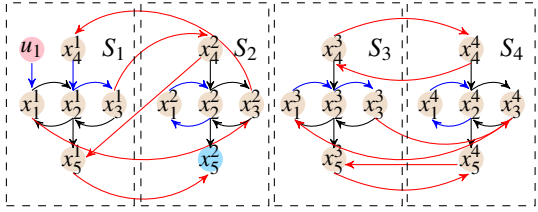
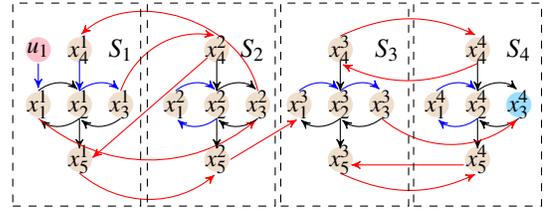
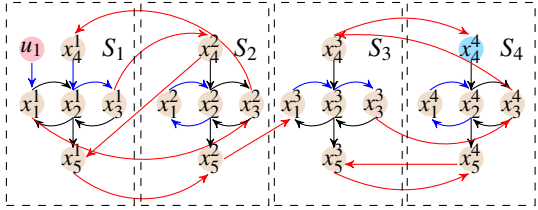
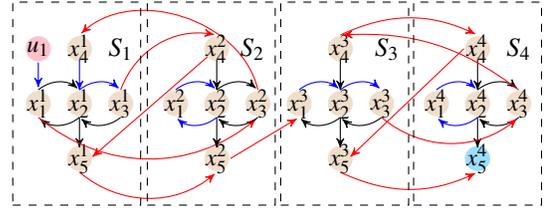
\begin{figure*}[t]
\centering
\begin{subfigure}[b]{0.45\textwidth}
\centering
\begin{tikzpicture}[scale = 0.73, ->,>=stealth',shorten >=1pt,auto,node distance=1.85cm, main node/.style={circle,draw,font=\scriptsize\bfseries}]
\definecolor{myblue}{RGB}{80,80,160}
\definecolor{almond}{rgb}{0.94, 0.87, 0.8}
\definecolor{bubblegum}{rgb}{0.99, 0.76, 0.8}
\definecolor{columbiablue}{rgb}{0.61, 0.87, 1.0}

  \fill[almond] (5.0,0) circle (6.5 pt);
  \fill[almond] (5.75,0) circle (6.5 pt);
  \fill[almond] (6.5,0) circle (6.5 pt);  
  \fill[almond] (5.75,1) circle (6.5 pt);
  \fill[almond] (5.75,-1) circle (6.5 pt);
  
  \fill[bubblegum] (5,1.0) circle (6.5 pt);
  \node at (5,1.0) {\small $u_1$};     
  \draw [blue] (5.0,0.75)  ->   (5.0,0.25); 

  \node at (5,0) {\small $x_1^1$};
  \node at (5.75,0) {\small $x_2^1$};
  \node at (6.5,0) {\small $x_3^1$};  
  \node at (5.75,1) {\small $x_4^1$};
  \node at (5.75,-1) {\small $x_5^1$};

\draw [blue] (5.75,0.8)  ->   (5.75,0.2);  
\draw (5.75,-0.25)  ->   (5.75,-0.85);  

\path[every node/.style={font=\sffamily\small}]
(5,0.25) edge[bend left = 40] node [left] {} (5.75,0.25)
(5.75,-0.25) edge[bend left = 40] node [left] {} (5,-0.25)
(5.75,0.25) edge[blue,bend left = 40] node [left] {} (6.5,0.25)
(6.5,-0.25) edge[bend left = 40] node [left] {} (5.75,-0.25)
(5,-0.25) edge[red,bend right = 40] node [right] {} (9,-0.25)
(6.5,0.25) edge[red,bend left = 40] node [left] {} (8.25,1.25)
(5.75,-1.25) edge[red,bend right = 40] node [right] {} (8.25,-1.25)
(9,0.25) edge[red,bend right = 60] node [right] {} (5.75,1.25)
(8.25,1.0) edge[red,bend left =0] node [left] {} (5.95,-1.0);

%%%%%%%%%%%%%%%%%%%%%%%%%%%%%%%%%%%%%%%%%%%%%%%%%%%%%%%
  \fill[almond] (7.5,0) circle (6.5 pt);
  \fill[almond] (8.25,0) circle (6.5 pt);
  \fill[almond] (9,0) circle (6.5 pt);  
  \fill[almond] (8.25,1) circle (6.5 pt);
  \fill[columbiablue] (8.25,-1) circle (6.5 pt);

  \node at (7.5,0) {\small $x_1^2$};
  \node at (8.25,0) {\small $x_2^2$};
  \node at (9,0) {\small $x_3^2$};  
  \node at (8.25,1) {\small $x_4^2$};
  \node at (8.25,-1) {\small $x_5^2$};

\draw (8.25,0.8)  ->   (8.25,0.2);  
\draw (8.25,-0.25)  ->   (8.25,-0.85);  

\path[every node/.style={font=\sffamily\small}]
(7.5,0.25) edge[blue,bend left = 40] node [left] {} (8.25,0.25)
(8.25,-0.25) edge[blue, bend left = 40] node [left] {} (7.5,-0.25)
(8.25,0.25) edge[bend left = 40] node [left] {} (9,0.25)
(9,-0.25) edge[bend left = 40] node [left] {} (8.25,-0.25);
%%%%%%%%%%%%%%%%%%%%%%%%%%%%%%%%%%%%%%%%%%%%%%%%%%%%%%%
  \fill[almond] (9.75,0) circle (6.5 pt);
  \fill[almond] (10.5,0) circle (6.5 pt);
  \fill[almond] (11.25,0) circle (6.5 pt);  
  \fill[almond] (10.5,1) circle (6.5 pt);
  \fill[almond] (10.5,-1) circle (6.5 pt);

  \node at (9.75,0) {\small $x_1^3$};
  \node at (10.5,0) {\small $x_2^3$};
  \node at (11.25,0) {\small $x_3^3$};  
  \node at (10.5,1) {\small $x_4^3$};
  \node at (10.5,-1) {\small $x_5^3$};
  
\draw (10.5,0.8)  ->   (10.5,0.2);  
\draw (10.5,-0.25)  ->   (10.5,-0.85);  

\path[every node/.style={font=\sffamily\small}]
(9.75,0.25) edge[blue,bend left = 40] node [left] {} (10.5,0.25)
(10.5,-0.25) edge[bend left = 40] node [left] {} (9.75,-0.25)
(10.5,0.25) edge[blue,bend left = 40] node [left] {} (11.25,0.25)
(11.25,-0.25) edge[bend left = 40] node [left] {} (10.5,-0.25);
%%%%%%%%%%%%%%%%%%%%%%%%%%%%%%%%%%%%%%%%%%%%%%%%%%%%%%%%

  \fill[almond] (12.25,0) circle (6.5 pt);
  \fill[almond] (13,0) circle (6.5 pt);
  \fill[almond] (13.75,0) circle (6.5 pt);  
  \fill[almond] (13,1) circle (6.5 pt);
  \fill[almond] (13,-1) circle (6.5 pt);

  \node at (12.25,0) {\small $x_1^4$};
  \node at (13,0) {\small $x_2^4$};
  \node at (13.75,0) {\small $x_3^4$};  
  \node at (13,1) {\small $x_4^4$};
  \node at (13,-1) {\small $x_5^4$};
 
\draw (13,0.8)  ->   (13,0.2);  
\draw (13,-0.25)  ->   (13,-0.85);
\draw[red] (12.75,-1.1)  ->   (10.65,-1.1);  

\path[every node/.style={font=\sffamily\small}]
(12.25,0.25) edge[blue,bend left = 40] node [left] {} (13,0.25)
(13,-0.25) edge[blue,bend left = 40] node [left] {} (12.25,-0.25)
(13,0.25) edge[bend left = 40] node [left] {} (13.75,0.25)
(13.75,-0.25) edge[bend left = 40] node [left] {} (13,-0.25)
(11.25,-0.25) edge[red,bend right = 40] node [right] {} (13.75,-0.25)
(10.5,1.25) edge[red,bend left = 25] node [left] {} (13,1.25)
(10.5,-1.25) edge[red,bend right = 25] node [right] {} (13,-1.25)
(13.75,-0.25) edge[red,bend left = 40] node [left] {} (9.75,-0.25)
(13,0.75) edge[red,bend left = 25] node [left] {} (10.5,0.75);
%%%%%%%%%%%%%%%%%%%%%%%%%%%%%%%%%%%%%%%%%%%%%%%%%%%%%%%%%%%%%%
\node (rect) at (5.7,0) [draw,dashed,minimum width=1.7cm,minimum height=2.7cm] {};
\node at (6.5, 1) {$S_1$};
\node (rect) at (8.15,0) [draw,dashed,minimum width=1.7cm,minimum height=2.7cm] {};
\node at (8.9, 1) {$S_2$};
\node (rect) at (10.58,0) [draw,dashed,minimum width=1.7cm,minimum height=2.7cm] {};
\node at (11.35, 1) {$S_3$};
\node (rect) at (13,0) [draw,dashed,minimum width=1.7 cm,minimum height=2.7cm] {};
\node at (13.75, 1) {$S_4$};

\end{tikzpicture}
\caption{For the matching $\widetilde{M}$ (shown in red and blue edges) SCCs $\cN_1, \cN_2, \cN_3, \cN_4, \cN_5$ are accessible. The unique accessible unmatched node with respect to this matching is $x^2_5$.}
\label{fig:illus_1}
\end{subfigure}~\hspace{8.5 mm}
\begin{subfigure}[b]{0.45\textwidth}
\centering
\begin{tikzpicture}[scale = 0.73, ->,>=stealth',shorten >=1pt,auto,node distance=1.85cm, main node/.style={circle,draw,font=\scriptsize\bfseries}]
\definecolor{myblue}{RGB}{80,80,160}
\definecolor{almond}{rgb}{0.94, 0.87, 0.8}
\definecolor{bubblegum}{rgb}{0.99, 0.76, 0.8}
\definecolor{columbiablue}{rgb}{0.61, 0.87, 1.0}

  \fill[almond] (5.0,0) circle (6.5 pt);
  \fill[almond] (5.75,0) circle (6.5 pt);
  \fill[almond] (6.5,0) circle (6.5 pt);  
  \fill[almond] (5.75,1) circle (6.5 pt);
  \fill[almond] (5.75,-1) circle (6.5 pt);

  \fill[bubblegum] (5,1.0) circle (6.5 pt);
  \node at (5,1.0) {\small $u_1$};     
  \draw [blue] (5.0,0.75)  ->   (5.0,0.25); 
  
  \node at (5,0) {\small $x_1^1$};
  \node at (5.75,0) {\small $x_2^1$};
  \node at (6.5,0) {\small $x_3^1$};  
  \node at (5.75,1) {\small $x_4^1$};
  \node at (5.75,-1) {\small $x_5^1$};

\draw [blue] (5.75,0.8)  ->   (5.75,0.2);  
\draw (5.75,-0.25)  ->   (5.75,-0.85);  

\path[every node/.style={font=\sffamily\small}]
(5,0.25) edge[bend left = 40] node [left] {} (5.75,0.25)
(5.75,-0.25) edge[bend left = 40] node [left] {} (5,-0.25)
(5.75,0.25) edge[blue,bend left = 40] node [left] {} (6.5,0.25)
(6.5,-0.25) edge[bend left = 40] node [left] {} (5.75,-0.25)
(5,-0.25) edge[red,bend right = 40] node [right] {} (9,-0.25)
(6.5,0.25) edge[red,bend left = 40] node [left] {} (8.25,1.25)
(5.75,-1.25) edge[red,bend right = 40] node [right] {} (8.25,-1.25)
(9,0.25) edge[red,bend right = 60] node [right] {} (5.75,1.25)
(8.25,1.0) edge[red,bend left =0] node [left] {} (5.95,-1.0);

%%%%%%%%%%%%%%%%%%%%%%%%%%%%%%%%%%%%%%%%%%%%%%%%%%%%%%%
  \fill[almond] (7.5,0) circle (6.5 pt);
  \fill[almond] (8.25,0) circle (6.5 pt);
  \fill[almond] (9,0) circle (6.5 pt);  
  \fill[almond] (8.25,1) circle (6.5 pt);
  \fill[almond] (8.25,-1) circle (6.5 pt);

  \node at (7.5,0) {\small $x_1^2$};
  \node at (8.25,0) {\small $x_2^2$};
  \node at (9,0) {\small $x_3^2$};  
  \node at (8.25,1) {\small $x_4^2$};
  \node at (8.25,-1) {\small $x_5^2$};

\draw (8.25,0.8)  ->   (8.25,0.2);  
\draw (8.25,-0.25)  ->   (8.25,-0.85);  

\path[every node/.style={font=\sffamily\small}]
(7.5,0.25) edge[blue,bend left = 40] node [left] {} (8.25,0.25)
(8.25,-0.25) edge[blue, bend left = 40] node [left] {} (7.5,-0.25)
(8.25,0.25) edge[bend left = 40] node [left] {} (9,0.25)
(9,-0.25) edge[bend left = 40] node [left] {} (8.25,-0.25);
%%%%%%%%%%%%%%%%%%%%%%%%%%%%%%%%%%%%%%%%%%%%%%%%%%%%%%%
  \fill[almond] (9.75,0) circle (6.5 pt);
  \fill[almond] (10.5,0) circle (6.5 pt);
  \fill[almond] (11.25,0) circle (6.5 pt);  
  \fill[almond] (10.5,1) circle (6.5 pt);
  \fill[almond] (10.5,-1) circle (6.5 pt);

  \node at (9.75,0) {\small $x_1^3$};
  \node at (10.5,0) {\small $x_2^3$};
  \node at (11.25,0) {\small $x_3^3$};  
  \node at (10.5,1) {\small $x_4^3$};
  \node at (10.5,-1) {\small $x_5^3$};
  
\draw (10.5,0.8)  ->   (10.5,0.2);  
\draw (10.5,-0.25)  ->   (10.5,-0.85);  

\path[every node/.style={font=\sffamily\small}]
(9.75,0.25) edge[blue,bend left = 40] node [left] {} (10.5,0.25)
(10.5,-0.25) edge[bend left = 40] node [left] {} (9.75,-0.25)
(10.5,0.25) edge[blue,bend left = 40] node [left] {} (11.25,0.25)
(11.25,-0.25) edge[bend left = 40] node [left] {} (10.5,-0.25);
%%%%%%%%%%%%%%%%%%%%%%%%%%%%%%%%%%%%%%%%%%%%%%%%%%%%%%%%

  \fill[almond] (12.25,0) circle (6.5 pt);
  \fill[almond] (13,0) circle (6.5 pt);
  \fill[columbiablue] (13.75,0) circle (6.5 pt);  
  \fill[almond] (13,1) circle (6.5 pt);
  \fill[almond] (13,-1) circle (6.5 pt);

  \node at (12.25,0) {\small $x_1^4$};
  \node at (13,0) {\small $x_2^4$};
  \node at (13.75,0) {\small $x_3^4$};  
  \node at (13,1) {\small $x_4^4$};
  \node at (13,-1) {\small $x_5^4$};
 
\draw (13,0.8)  ->   (13,0.2);  
\draw (13,-0.25)  ->   (13,-0.85);
\draw[red] (12.75,-1.1)  ->   (10.65,-1.1);  

\path[every node/.style={font=\sffamily\small}]
(12.25,0.25) edge[blue,bend left = 40] node [left] {} (13,0.25)
(13,-0.25) edge[blue,bend left = 40] node [left] {} (12.25,-0.25)
(13,0.25) edge[bend left = 40] node [left] {} (13.75,0.25)
(13.75,-0.25) edge[bend left = 40] node [left] {} (13,-0.25)
(11.25,-0.25) edge[red,bend right = 40] node [right] {} (13.75,-0.25)
(10.5,1.25) edge[red,bend left = 25] node [left] {} (13,1.25)
(10.5,-1.25) edge[red,bend right = 25] node [right] {} (13,-1.25)
(8.5,-1) edge[red,bend left = 0] node [left] {} (9.75,-0.25)
(13,0.75) edge[red,bend left = 25] node [left] {} (10.5,0.75);
%%%%%%%%%%%%%%%%%%%%%%%%%%%%%%%%%%%%%%%%%%%%%%%%%%%%%%%%%%%%%%
\node (rect) at (5.7,0) [draw,dashed,minimum width=1.7cm,minimum height=2.7cm] {};
\node at (6.5, 1) {$S_1$};
\node (rect) at (8.15,0) [draw,dashed,minimum width=1.7cm,minimum height=2.7cm] {};
\node at (8.9, 1) {$S_2$};
\node (rect) at (10.58,0) [draw,dashed,minimum width=1.7cm,minimum height=2.7cm] {};
\node at (11.35, 1) {$S_3$};
\node (rect) at (13,0) [draw,dashed,minimum width=1.7 cm,minimum height=2.7cm] {};
\node at (13.75, 1) {$S_4$};

\end{tikzpicture}
\caption{For the matching $\widetilde{M}$ (shown in red and blue edges) SCCs $\cN_1, \cN_2, \cN_3, \cN_4, \cN_5, \cN_6, \cN_9$ are accessible. The unique accessible unmatched node with respect to this matching is $x^4_3$.}
\label{fig:illus_2}
\end{subfigure}

\begin{subfigure}[b]{0.45\textwidth}
\centering
\begin{tikzpicture}[scale = 0.73, ->,>=stealth',shorten >=1pt,auto,node distance=1.85cm, main node/.style={circle,draw,font=\scriptsize\bfseries}]
\definecolor{myblue}{RGB}{80,80,160}
\definecolor{almond}{rgb}{0.94, 0.87, 0.8}
\definecolor{bubblegum}{rgb}{0.99, 0.76, 0.8}
\definecolor{columbiablue}{rgb}{0.61, 0.87, 1.0}

  \fill[almond] (5.0,0) circle (6.5 pt);
  \fill[almond] (5.75,0) circle (6.5 pt);
  \fill[almond] (6.5,0) circle (6.5 pt);  
  \fill[almond] (5.75,1) circle (6.5 pt);
  \fill[almond] (5.75,-1) circle (6.5 pt);

  \fill[bubblegum] (5,1.0) circle (6.5 pt);
  \node at (5,1.0) {\small $u_1$};     
  \draw [blue] (5.0,0.75)  ->   (5.0,0.25); 
  
  \node at (5,0) {\small $x_1^1$};
  \node at (5.75,0) {\small $x_2^1$};
  \node at (6.5,0) {\small $x_3^1$};  
  \node at (5.75,1) {\small $x_4^1$};
  \node at (5.75,-1) {\small $x_5^1$};

\draw [blue] (5.75,0.8)  ->   (5.75,0.2);  
\draw (5.75,-0.25)  ->   (5.75,-0.85);  

\path[every node/.style={font=\sffamily\small}]
(5,0.25) edge[bend left = 40] node [left] {} (5.75,0.25)
(5.75,-0.25) edge[bend left = 40] node [left] {} (5,-0.25)
(5.75,0.25) edge[blue,bend left = 40] node [left] {} (6.5,0.25)
(6.5,-0.25) edge[bend left = 40] node [left] {} (5.75,-0.25)
(5,-0.25) edge[red,bend right = 40] node [right] {} (9,-0.25)
(6.5,0.25) edge[red,bend left = 40] node [left] {} (8.25,1.25)
(5.75,-1.25) edge[red,bend right = 40] node [right] {} (8.25,-1.25)
(9,0.25) edge[red,bend right = 60] node [right] {} (5.75,1.25)
(8.25,1.0) edge[red,bend left =0] node [left] {} (5.95,-1.0);

%%%%%%%%%%%%%%%%%%%%%%%%%%%%%%%%%%%%%%%%%%%%%%%%%%%%%%%
  \fill[almond] (7.5,0) circle (6.5 pt);
  \fill[almond] (8.25,0) circle (6.5 pt);
  \fill[almond] (9,0) circle (6.5 pt);  
  \fill[almond] (8.25,1) circle (6.5 pt);
  \fill[almond] (8.25,-1) circle (6.5 pt);

  \node at (7.5,0) {\small $x_1^2$};
  \node at (8.25,0) {\small $x_2^2$};
  \node at (9,0) {\small $x_3^2$};  
  \node at (8.25,1) {\small $x_4^2$};
  \node at (8.25,-1) {\small $x_5^2$};

\draw (8.25,0.8)  ->   (8.25,0.2);  
\draw (8.25,-0.25)  ->   (8.25,-0.85);  

\path[every node/.style={font=\sffamily\small}]
(7.5,0.25) edge[blue,bend left = 40] node [left] {} (8.25,0.25)
(8.25,-0.25) edge[blue, bend left = 40] node [left] {} (7.5,-0.25)
(8.25,0.25) edge[bend left = 40] node [left] {} (9,0.25)
(9,-0.25) edge[bend left = 40] node [left] {} (8.25,-0.25);
%%%%%%%%%%%%%%%%%%%%%%%%%%%%%%%%%%%%%%%%%%%%%%%%%%%%%%%
  \fill[almond] (9.75,0) circle (6.5 pt);
  \fill[almond] (10.5,0) circle (6.5 pt);
  \fill[almond] (11.25,0) circle (6.5 pt);  
  \fill[almond] (10.5,1) circle (6.5 pt);
  \fill[almond] (10.5,-1) circle (6.5 pt);

  \node at (9.75,0) {\small $x_1^3$};
  \node at (10.5,0) {\small $x_2^3$};
  \node at (11.25,0) {\small $x_3^3$};  
  \node at (10.5,1) {\small $x_4^3$};
  \node at (10.5,-1) {\small $x_5^3$};
  
\draw (10.5,0.8)  ->   (10.5,0.2);  
\draw (10.5,-0.25)  ->   (10.5,-0.85);  

\path[every node/.style={font=\sffamily\small}]
(9.75,0.25) edge[blue,bend left = 40] node [left] {} (10.5,0.25)
(10.5,-0.25) edge[bend left = 40] node [left] {} (9.75,-0.25)
(10.5,0.25) edge[blue,bend left = 40] node [left] {} (11.25,0.25)
(11.25,-0.25) edge[bend left = 40] node [left] {} (10.5,-0.25);
%%%%%%%%%%%%%%%%%%%%%%%%%%%%%%%%%%%%%%%%%%%%%%%%%%%%%%%%

  \fill[almond] (12.25,0) circle (6.5 pt);
  \fill[almond] (13,0) circle (6.5 pt);
  \fill[almond] (13.75,0) circle (6.5 pt);  
  \fill[columbiablue] (13,1) circle (6.5 pt);
  \fill[almond] (13,-1) circle (6.5 pt);

  \node at (12.25,0) {\small $x_1^4$};
  \node at (13,0) {\small $x_2^4$};
  \node at (13.75,0) {\small $x_3^4$};  
  \node at (13,1) {\small $x_4^4$};
  \node at (13,-1) {\small $x_5^4$};
 
\draw (13,0.8)  ->   (13,0.2);  
\draw (13,-0.25)  ->   (13,-0.85);
\draw[red] (12.75,-1.1)  ->   (10.65,-1.1);  

\path[every node/.style={font=\sffamily\small}]
(12.25,0.25) edge[blue,bend left = 40] node [left] {} (13,0.25)
(13,-0.25) edge[blue,bend left = 40] node [left] {} (12.25,-0.25)
(13,0.25) edge[bend left = 40] node [left] {} (13.75,0.25)
(13.75,-0.25) edge[bend left = 40] node [left] {} (13,-0.25)
(11.25,-0.25) edge[red,bend right = 40] node [right] {} (13.75,-0.25)
(10.5,1.25) edge[red,bend left = 25] node [left] {} (13,1.25)
(10.5,-1.25) edge[red,bend right = 25] node [right] {} (13,-1.25)
(8.5,-1) edge[red,bend left = 0] node [left] {} (9.75,-0.25)
(13.75,0.25) edge[red,bend right = 20] node [right] {} (10.5,1.25);
%%%%%%%%%%%%%%%%%%%%%%%%%%%%%%%%%%%%%%%%%%%%%%%%%%%%%%%%%%%%%%
\node (rect) at (5.7,0) [draw,dashed,minimum width=1.7cm,minimum height=2.7cm] {};
\node at (6.5, 1) {$S_1$};
\node (rect) at (8.15,0) [draw,dashed,minimum width=1.7cm,minimum height=2.7cm] {};
\node at (8.9, 1) {$S_2$};
\node (rect) at (10.58,0) [draw,dashed,minimum width=1.7cm,minimum height=2.7cm] {};
\node at (11.35, 1) {$S_3$};
\node (rect) at (13,0) [draw,dashed,minimum width=1.7 cm,minimum height=2.7cm] {};
\node at (13.75, 1) {$S_4$};

\end{tikzpicture}
\caption{For the matching $\widetilde{M}$ (shown in red and blue edges) SCCs $\cN_1, \cN_2, \cN_3, \cN_4, \cN_5, \cN_6, \cN_7, \cN_9, \cN_{10}$ are accessible. The unique accessible unmatched node with respect to this matching is $x^4_4$.}
\label{fig:illus_3}
\end{subfigure}~\hspace{8.5 mm}
\begin{subfigure}[b]{0.45\textwidth}
\centering
\begin{tikzpicture}[scale = 0.73, ->,>=stealth',shorten >=1pt,auto,node distance=1.85cm, main node/.style={circle,draw,font=\scriptsize\bfseries}]
\definecolor{myblue}{RGB}{80,80,160}
\definecolor{almond}{rgb}{0.94, 0.87, 0.8}
\definecolor{bubblegum}{rgb}{0.99, 0.76, 0.8}
\definecolor{columbiablue}{rgb}{0.61, 0.87, 1.0}

  \fill[almond] (5.0,0) circle (6.5 pt);
  \fill[almond] (5.75,0) circle (6.5 pt);
  \fill[almond] (6.5,0) circle (6.5 pt);  
  \fill[almond] (5.75,1) circle (6.5 pt);
  \fill[almond] (5.75,-1) circle (6.5 pt);

  \fill[bubblegum] (5,1.0) circle (6.5 pt);
  \node at (5,1.0) {\small $u_1$};     
  \draw [blue] (5.0,0.75)  ->   (5.0,0.25); 
  
  \node at (5,0) {\small $x_1^1$};
  \node at (5.75,0) {\small $x_2^1$};
  \node at (6.5,0) {\small $x_3^1$};  
  \node at (5.75,1) {\small $x_4^1$};
  \node at (5.75,-1) {\small $x_5^1$};

\draw [blue] (5.75,0.8)  ->   (5.75,0.2);  
\draw (5.75,-0.25)  ->   (5.75,-0.85);  

\path[every node/.style={font=\sffamily\small}]
(5,0.25) edge[bend left = 40] node [left] {} (5.75,0.25)
(5.75,-0.25) edge[bend left = 40] node [left] {} (5,-0.25)
(5.75,0.25) edge[blue,bend left = 40] node [left] {} (6.5,0.25)
(6.5,-0.25) edge[bend left = 40] node [left] {} (5.75,-0.25)
(5,-0.25) edge[red,bend right = 40] node [right] {} (9,-0.25)
(6.5,0.25) edge[red,bend left = 40] node [left] {} (8.25,1.25)
(5.75,-1.25) edge[red,bend right = 40] node [right] {} (8.25,-1.25)
(9,0.25) edge[red,bend right = 60] node [right] {} (5.75,1.25)
(8.25,1.0) edge[red,bend left =0] node [left] {} (5.95,-1.0);

%%%%%%%%%%%%%%%%%%%%%%%%%%%%%%%%%%%%%%%%%%%%%%%%%%%%%%%
  \fill[almond] (7.5,0) circle (6.5 pt);
  \fill[almond] (8.25,0) circle (6.5 pt);
  \fill[almond] (9,0) circle (6.5 pt);  
  \fill[almond] (8.25,1) circle (6.5 pt);
  \fill[almond] (8.25,-1) circle (6.5 pt);

  \node at (7.5,0) {\small $x_1^2$};
  \node at (8.25,0) {\small $x_2^2$};
  \node at (9,0) {\small $x_3^2$};  
  \node at (8.25,1) {\small $x_4^2$};
  \node at (8.25,-1) {\small $x_5^2$};

\draw (8.25,0.8)  ->   (8.25,0.2);  
\draw (8.25,-0.25)  ->   (8.25,-0.85);  

\path[every node/.style={font=\sffamily\small}]
(7.5,0.25) edge[blue,bend left = 40] node [left] {} (8.25,0.25)
(8.25,-0.25) edge[blue, bend left = 40] node [left] {} (7.5,-0.25)
(8.25,0.25) edge[bend left = 40] node [left] {} (9,0.25)
(9,-0.25) edge[bend left = 40] node [left] {} (8.25,-0.25);
%%%%%%%%%%%%%%%%%%%%%%%%%%%%%%%%%%%%%%%%%%%%%%%%%%%%%%%
  \fill[almond] (9.75,0) circle (6.5 pt);
  \fill[almond] (10.5,0) circle (6.5 pt);
  \fill[almond] (11.25,0) circle (6.5 pt);  
  \fill[almond] (10.5,1) circle (6.5 pt);
  \fill[almond] (10.5,-1) circle (6.5 pt);

  \node at (9.75,0) {\small $x_1^3$};
  \node at (10.5,0) {\small $x_2^3$};
  \node at (11.25,0) {\small $x_3^3$};  
  \node at (10.5,1) {\small $x_4^3$};
  \node at (10.5,-1) {\small $x_5^3$};
  
\draw (10.5,0.8)  ->   (10.5,0.2);  
\draw (10.5,-0.25)  ->   (10.5,-0.85);  

\path[every node/.style={font=\sffamily\small}]
(9.75,0.25) edge[blue,bend left = 40] node [left] {} (10.5,0.25)
(10.5,-0.25) edge[bend left = 40] node [left] {} (9.75,-0.25)
(10.5,0.25) edge[blue,bend left = 40] node [left] {} (11.25,0.25)
(11.25,-0.25) edge[bend left = 40] node [left] {} (10.5,-0.25);
%%%%%%%%%%%%%%%%%%%%%%%%%%%%%%%%%%%%%%%%%%%%%%%%%%%%%%%%

  \fill[almond] (12.25,0) circle (6.5 pt);
  \fill[almond] (13,0) circle (6.5 pt);
  \fill[almond] (13.75,0) circle (6.5 pt);  
  \fill[almond] (13,1) circle (6.5 pt);
  \fill[columbiablue] (13,-1) circle (6.5 pt);

  \node at (12.25,0) {\small $x_1^4$};
  \node at (13,0) {\small $x_2^4$};
  \node at (13.75,0) {\small $x_3^4$};  
  \node at (13,1) {\small $x_4^4$};
  \node at (13,-1) {\small $x_5^4$};
 
\draw (13,0.8)  ->   (13,0.2);  
\draw (13,-0.25)  ->   (13,-0.85);
\draw[red] (12.75,1.0)  ->   (10.65,-1.0);  

\path[every node/.style={font=\sffamily\small}]
(12.25,0.25) edge[blue,bend left = 40] node [left] {} (13,0.25)
(13,-0.25) edge[blue,bend left = 40] node [left] {} (12.25,-0.25)
(13,0.25) edge[bend left = 40] node [left] {} (13.75,0.25)
(13.75,-0.25) edge[bend left = 40] node [left] {} (13,-0.25)
(11.25,-0.25) edge[red,bend right = 40] node [right] {} (13.75,-0.25)
(10.5,1.25) edge[red,bend left = 25] node [left] {} (13,1.25)
(10.5,-1.25) edge[red,bend right = 25] node [right] {} (13,-1.25)
(8.5,-1) edge[red,bend left = 0] node [left] {} (9.75,-0.25)
(13.75,0.25) edge[red,bend right = 20] node [right] {} (10.5,1.25);
%%%%%%%%%%%%%%%%%%%%%%%%%%%%%%%%%%%%%%%%%%%%%%%%%%%%%%%%%%%%%%
\node (rect) at (5.7,0) [draw,dashed,minimum width=1.7cm,minimum height=2.7cm] {};
\node at (6.5, 1) {$S_1$};
\node (rect) at (8.15,0) [draw,dashed,minimum width=1.7cm,minimum height=2.7cm] {};
\node at (8.9, 1) {$S_2$};
\node (rect) at (10.58,0) [draw,dashed,minimum width=1.7cm,minimum height=2.7cm] {};
\node at (11.35, 1) {$S_3$};
\node (rect) at (13,0) [draw,dashed,minimum width=1.7 cm,minimum height=2.7cm] {};
\node at (13.75, 1) {$S_4$};

\end{tikzpicture}
\caption{For the matching $\widetilde{M}$ (shown in red and blue edges) SCCs $\cN_1, \cN_2, \cN_3, \cN_4, \cN_5, \cN_6, \cN_7, \cN_8, \cN_9, \cN_{10}, \cN_{11}$ are accessible. The unique accessible unmatched node with respect to this matching is $x^4_5$.}
\label{fig:illus_4}
\end{subfigure}
\caption{Illustrative example demonstrating Algorithm~\ref{alg:similar_inter} on subsystems $S_1, S_2, S_3$ and $S_4$. The blue and the red edges corresponds to a matching in $\B(\bA, \bB)$. The blue edges are those edges which connects two nodes in the same subsystem and the red edges are the interconnections in the matching. The set $\cN$ consists of $11$ non-top linked SCCs.}
\label{fig:main}
\end{figure*}
\vspace*{-2 mm}
%%%%%%%%%%%%%%%%%%%%%%%%%%%%%%%%%%%%%%%%%%%%%%%%%%%%%%%%%%%%%%%%%%%%%%%%%%%%%%%%%%%%%%%%%%%%%%%%%%
\vspace*{-2.63 mm}
\subsection{Special Cases}\label{sec:special_cases}
Now, we will focus on few special cases, where the minimum number of interconnections can be more directly obtained, and see the value of $|\I^\*|$ for these cases.
\subsubsection*{Structurally Cyclic Systems}
 The first case is when $\bA_s$ is structurally cyclic\footnote{A structured system $\bA$ is said to be {\it structurally cyclic} if its state bipartite graph $\B(\bA)$ has a perfect matching.}. There exists practically important systems, for instance {\em self damped} systems including multi-agent systems and epidemic dynamics,  that are structurally cyclic \cite{PeqSouPed:15}, \cite{ChaMes:13}. Then, $\B(\bA_s)$ has a perfect matching. So the composite system does not have dilation even without using any interconnection. Thus only the accessibility condition has to be catered. For optimum matching $M^\*$ in $\B(\bA, \bB, \cN)$, our algorithm gives $\alpi+\betn = 0$. Hence, $|\I^\*| = q$. In other words, the set of interconnections needed to solve Problem~\ref{prob:similar_int} equals the number of non-top linked SCCs that are not accessible. 

\subsubsection*{Controller Canonical Form}
Now we consider another class of systems, where $\bA_s$ is in the controller canonical form and $\bB = \*\,e_{\nT}$, where $e_{\nT}$ is the last column of the $(\nT \times \nT)$ identity matrix. For example, $\bA_s =  \left[ \begin{smallmatrix} 0 & \* & 0\\ 0 & 0 & \* \\ \* & \* & \*  \end{smallmatrix}  \right]$.
% and $\bB =  \left[ \begin{smallmatrix}  0 \\ 0 \\ \* \end{smallmatrix}  \right]$.
 Notice that, if $\bA_s$ is in the controller canonical form, then $\B(\bA_s)$ has a perfect matching. Thus the composite system does not have dilation even without using any interconnection edge. Thus for optimum matching $M^\*$ in $\B(\bA, \bB, \cN)$, we get $\alpi+\betn = 0$. Further, $\D_i(\bA_s)$ is irreducible for all $i \in \{1, \ldots, k\}$. Thus, $|\I^\*| = q = k-1$, since each subsystem is a non-top linked SCC and exactly one subsystem is accessible without using any interconnections. 
%The optimum value $|\I^\*| = k-1$ is in agreement with the already known fact that while composing subsystems, where each subsystem is in the controller canonical form and single input of the form specified, making extra $k-1$ links will make the composite system controllable. 
\subsubsection*{Subsystems that are Individually Structurally Controllable}

Here we consider subsystems that are individually structurally controllable with the given input. This means that all states are accessible in each subsystem and there is no dilation in each subsystem separately. Thus all non-top linked SCCs are input accessible even in the composite system, hence accessibility is satisfied. For the dilation condition, since the subsystems are structurally identical, one of the following has to hold. (i)~$\B(\bA_s)$ has a perfect matching, or (ii)~size of the maximum matching in $\B(\bA_s)$ is $n_s - 1$ (since with just one input all subsystems were structurally controllable). In case~(i), there is no need for any interconnection to make the composite system structurally controllable. In case~(ii), exactly $k-1$ interconnections are needed since the matching size is one less in $k-1$ subsystems (one subsystem connects to input node) and this has to be achieved through interconnections. Now, analysing this case using our algorithm, $q = 0$. Thus $|\I^\*| = \betn = k-1$, if there is no perfect matching in $\B(\bA_s)$ and $|\I^\*| = 0$ otherwise.
%%%%%%%%%%%%%%%%%%%%%%%%%%%%%%%%%%%%%%%%%%%%%%%%%%%%%%%%%%%%%%%%%%%%%%%%%%%%%%%%%%%%%%%%%%%%%%%
\vspace*{-2.63 mm}
\subsection{Multi-input Case}\label{sec:extension}
The discussions and results given in this paper extends to the multi-input case. We briefly explain the outline of the extension in this subsection. For a multi-input case, consider any optimum matching $M^\*$ obtained in Step~\ref{step:MCMM} of Algorithm~\ref{alg:similar_inter}. Then, $|M^\* \cap \E_\I| \geqslant 1$. Let $|M^\* \cap \E_\cN| = \alpi$ and $|M^\* \cap \E_\I| = \betn$. The matching $\widetilde{M}$ constructed in Step~\ref{step:Mt}, consists of $\alpi+\betn$ interconnections.  Further, there exists at least one unmatched accessible node corresponding to $\widetilde{M}$.  Thus, we can attain a matching $\hat{M}$ in $\B(\bA, \bB)$ with $|\hat{M} \cap \E_\I| = \alpi+\betn$ such that $\alpi$ SCCs are accessible using the interconnections in $\hat{M}$. Hence we can achieve accessibility of $\alpi$ non-top SCCs using the same number of interconnections as before. The remaining SCCs can be made accessible using extra interconnections as in Step~\ref{step:extra_accessibility}. Note that the proofs in this paper uses two concepts: (a)~in an optimum matching $\widetilde{M}$ there exists a node matched to some input node and (b)~there exists an unmatched accessible node in $\widetilde{M}$. Both (a)~and~(b) continue to be true for the multi-input case also. Thus the algorithm and results apply to the multi-input case. 
%%%%%%%%%%%%%%%%%%%%%%%%%%%%%%%%%%%%%%%%%%%%%%%%%%%%%%%%%%%%%%%%%%%%%%%%%%%%%%%%%%%%%%%%%%%%%%%%%%%%%%%
\vspace*{-2.0 mm}
\section{Conclusion}\label{sec:conclu}
In this paper, we studied structural controllability of an LTI composite system consisting of several subsystems. The objective is to find a minimum cardinality set of interconnections among these subsystems that the composite system is structurally controllable using a specified input matrix. The analysis is done in a structured framework by using the sparsity pattern of the system matrices. In this paper we considered subsystems with identical sparsity pattern and proposed a polynomial time algorithm for solving the optimal selection of interconnections in composite systems. Given a set of structured subsystems and input matrix, we first identified the cardinality of the minimum set of interconnections that must be established to attain structural controllability (Theorem~\ref{th:opt_value}). Then we proposed an algorithm to obtain these interconnections (Algorithm~\ref{alg:similar_inter}). This algorithm identifies a set of neighbours for each subsystem such that the composite system is structurally controllable with least possible number of interconnections (Theorem~\ref{th:opt}) and has polynomial complexity (Theorem~\ref{th:comp1}). For notational convenience and brevity, we discussed single input case in this paper. However, all the analysis carried out here directly extends to the multi-input case as discussed in Section~\ref{sec:extension}. Needless to elaborate, due to duality between controllability and observability in LTI systems all results of this paper directly follows to the observability problem .
%%%%%%%%%%%%%%%%%%%%%%%%%%%%%%%%%%%%%%%%%%%%%%%%%%%%%%%%%%%
\vspace*{-2.0 mm}
\bibliographystyle{myIEEEtran}  
\bibliography{myreferences} 
%%%%%%%%%%%%%%%%%%%%%%%%%%%%%%%%%%%%%%%%%%%%%%%%%%%%%%%%%%%%%%%%%%
\end{document}